\documentclass[12pt,english,a4paper]{article}
\usepackage[centertags]{amsmath}
\usepackage{amssymb}
\usepackage{amsthm}
\usepackage{makeidx}

\newtheorem{theorem}{Theorem}[section]
\newtheorem{corollary}[theorem]{Corollary}
\newtheorem{lemma}[theorem]{Lemma}
\newtheorem{proposition}[theorem]{Proposition}
\newtheorem{definition}[theorem]{Definition}
\newtheorem{remark}[theorem]{Remark}
\newtheorem{example}[theorem]{Example}

\numberwithin{equation}{section} 

\begin{document}

\title{Around operators not increasing the degree of polynomials}

\author{T. Augusta Mesquita\footnote{Corresponding author (teresam@portugalmail.pt ; teresa.mesquita@fc.up.pt)} ,  P. Maroni }
\date{}
\maketitle

\begin{center}
{\scriptsize
Instituto Polit\'ecnico de Viana do Castelo, Av. do Atl\^antico, 4900-348 Viana do Castelo, Portugal \& \\
Centro de Matem\'{a}tica da Univ. do Porto, Rua do Campo Alegre, 687, 4169-007 Porto, Portugal\\
\medskip
\medskip
CNRS, UMR 7598, Laboratoire Jacques-Louis Lions, F-75005, Paris, France \& \\
UPMC Univ Paris 06, UMR 7598, Lab. Jacques-Louis Lions,
F-75005, Paris, France;\\
}
\end{center}

\begin{abstract}
We present a generic operator $J$ simply defined as a linear map not increasing the degree from the vectorial space of polynomial functions into itself and we address the problem of finding the polynomial sequences that coincide with the (normalized) $J$-image of themselves. The technique developed assembles different types of operators  and initiates with a transposition of the problem to the dual space. It is also provided examples where the results are applied to the case where $J$'s expansion is limited to three terms.
\end{abstract}

 \textbf{Keywords and phrases}: classical orthogonal polynomials \,,\, differential operators \,,\, Appell polynomial sequences \,,\, two-orthogonal polynomials.\\
 \textbf{2010 Mathematics Subject Classification}: 42C05 \,,\,  33C45 \,,\, 33D45

\section*{Introduction}

Classical orthogonal polynomials satisfy very well known 
differential equations of the second order with polynomial coefficients (e.g. \cite{Hahn2, Chihara, Everitt-kwon-Littlejohn-W, euler}) and their different families emerge from a thorough description of such differential equations. Besides this  well settled characterization of the classical orthogonal polynomials, other investigations deal with diverse differential relations fulfilled by sequences of orthogonal polynomials.
\newline For example, in \cite{Dattoli2001} Dattoli \textit{et al.} employ the monomiality principle in order to obtain a differential equation fulfilled by each polynomial $p_{n}(x)$ and ultimately attain an operational definition of a given orthogonal sequence from which interesting properties are deduced. In particular, the monomiality principle
combines the use of two operators written in terms of the derivative operator and its inverse so that these two operators play similar roles to those of differentiation and multiplication by $x$ on monomials, respectively.
\newline In other important contributions like \cite{kwon}, Kwon \textit{et al.} consider a generic  differential operator $L_{N}[y]=\sum_{i=1}^{N}a_{i}(x)y^{(i)}(x)$ and discuss the existence of orthogonal polynomial sequences $\{ P_{n} \}_{n\geq 0}$, with $\deg\left( P_{n}(x)  \right)= n$, such that $L_{N}[P_{n}](x)=\lambda_{n}P_{n}$, for each $n$. It is then worth notice that in view of the actual state of art,  with regard to differential operators $L$ such that $\deg\left(L\left(P_{n}\right)   \right)= n $, operating into an orthogonal sequence, we have already some acute results, as for instance, the nonexistence of orthogonal solutions among such differential equations of odd order \cite{kwon}. 
More recently, some incursions on the study of an Appell-type behavior of orthogonal polynomials \cite{QuadraticAppell, MesquitaPMH} allowed to gain new insights concerning differential relations fulfilled by orthogonal polynomial sequences.

In the present paper we aim to address a generic problem of determining the polynomial sequences fulfilling a differential relation based upon some of the utensils established by the authors in prior reference swhere the dual sequence has a major role.
\newline
Moreover, hereafter we become endowed with a generic set of results around a wider operator $J$ which can simply be described as a linear map not increasing the degree from the vectorial space of polynomial functions into itself. In the pages ahead, we shall be dealing with monic polynomial sequences $\{P_{n} \}_{n \geq 0}$ such that $ \textbf{P} = \Lambda\, J\left( \textbf{P}\right)$, with $\textbf{P}= \left(P_{0}, P_{1}, \ldots   \right)$, for a certain matrix $\Lambda$ gathering normalization coefficients. The method suits nicely  whether $J$ is an isomorphism  or when it imitates the usual derivative of order $k$, being $k$ any positive integer.  

After having reviewed the fundamental background in section 1,  the second and third sections present the characteristics of an operator $J$, firstly defined generally by an operator which do not increases the degree of polynomials and secondly focusing on the ones that imitate the behavior of a standard derivative of order $k$, for any non negative integer $k$.
In the last sections we apply the techniques previously outlined to an operator $J$ whose expansion is restricted to three initial terms considering that $\{P_{n} \}_{n \geq 0}$ is an orthogonal sequence. As an introduction to the analysis of multiple orthogonality we also deal with a two-orthogonal MPS transformed by a lowering operator, that is, an operator acting as a first order derivative.

\section{Basic definitions and notation}

Let $\mathcal{P}$ denote the vector space  of polynomials with coefficients in  $\mathbb{C}$ and let $\mathcal{P}'$ be its dual. We indicate by $\langle u,p \rangle$ the action of the form or linear functional  $u \in \mathcal{P}'$ on $p \in \mathcal{P}$.
\newline In particular, $(u)_{n}= \langle u,x^{n} \rangle,\: n \geq 0$, are called the moments of $u$. A form $u$ is equivalent to the numerical sequence  $\{(u)_{n}\}_{n \geq 0}$.

In the sequel, we will call polynomial sequence (PS) to any sequence 
${\{P_{n}\}}_{n \geq 0}$ such that $\deg P_{n}= n,\; \forall n \geq 0$.
We will also call monic polynomial sequence (MPS) a PS so that all polynomials have leading coefficient equal to one. Notice that if $\langle u,P_{n} \rangle=0,\; \forall n \geq 0$, then $u=0$. Given a MPS ${\{P_{n}\}}_{n \geq 0}$, there are complex sequences, ${\{\beta_{n}\}}_{n \geq 0}$ and
$\{\chi_{n,\nu}\}_{0 \leq \nu \leq n,\; n \geq 0},$ such that
\begin{align}
&P_{0}(x)=1, \;\; P_{1}(x)=x-\beta_{0},\label{divisao_ci}\\
&P_{n+2}(x)=(x-\beta_{n+1})P_{n+1}(x)-\sum_{\nu=0}^{n}\chi_{n,\nu}P_{\nu}(x).\label{divisao}
\end{align}
This relation is often called the structure relation of  ${\{P_{n}\}}_{n \geq 0}$, and ${\{\beta_{n}\}}_{n \geq 0}$ and
$\{\chi_{n,\nu}\}_{0 \leq \nu \leq n,\; n \geq 0}$ are called the structure coefficients.
Moreover, there exists a unique sequence  ${\{u_{n}\}}_{n \geq 0},\;\;
u_{n} \in \mathcal{P}'$, called the dual sequence of  ${\{P_{n}\}}_{n \geq 0}$, such that
$$\langle u_{n},P_{m} \rangle= \delta_{n,m},\;\; n,m \geq 0,$$
where $ \delta_{n,m}$ denotes the Kronecker symbol. Let us remark that, if $p$ is a polynomial and $ \langle u_{n},p \rangle=0,\; \forall n \geq 0$, then $p=0$.
Besides, it is well known that \cite{variations}
\begin{align}
&\label{betas}\beta_{n}= \langle u_{n},xP_{n}(x) \rangle,\:\: n \geq 0,\\
&\label{quisnus}\chi_{n,\nu}= \langle u_{\nu},xP_{n+1}(x) \rangle,\:\: 0\leq \nu \leq n,\:\: n \geq 0.
\end{align}
\begin{lemma}\cite{variations}\label{lema1}
For each $u \in \mathcal{P}^{\prime}$ and each $m \geq 1$, the two following propositions are equivalent.
\begin{description}
\item[a)]  $\langle u, P_{m-1} \rangle \neq 0,\:\: \langle u, P_{n} \rangle =0,\: n \geq m$.
\item[b)] $\exists \lambda_{\nu} \in \mathbb{C},\:\: 0 \leq \nu \leq m-1,$ such that
$u=\displaystyle\sum_{\nu=0}^{m-1}\lambda_{\nu} u_{\nu}$, with $\lambda_{m-1}\neq 0$. In particular, $\lambda_{\nu} = \langle u, P_{\nu} \rangle$. \end{description}
\end{lemma}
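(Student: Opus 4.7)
The plan is to prove both implications directly using the defining property $\langle u_\nu, P_m\rangle = \delta_{\nu,m}$ of the dual sequence, together with the remark recorded just before the lemma that the only $u \in \mathcal{P}'$ annihilating every $P_n$ is $u=0$.

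For the easy direction (b) $\Rightarrow$ (a), I would just plug in: if $u = \sum_{\nu=0}^{m-1} \lambda_\nu u_\nu$, then $\langle u, P_n\rangle = \sum_{\nu=0}^{m-1}\lambda_\nu \delta_{\nu,n}$. This is $\lambda_n$ for $0 \le n \le m-1$ and $0$ for $n \ge m$; in particular $\langle u, P_{m-1}\rangle = \lambda_{m-1} \neq 0$. This also shows, as a byproduct, the identification $\lambda_\nu = \langle u, P_\nu\rangle$.

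For (a) $\Rightarrow$ (b), I would introduce the auxiliary functional
\[
v \; := \; u \; - \; \sum_{\nu=0}^{m-1} \langle u, P_\nu\rangle\, u_\nu ,
\]
and compute $\langle v, P_n\rangle$ for every $n\ge 0$. When $0 \le n \le m-1$ the duality relation collapses the sum to the single term $\langle u, P_n\rangle$, which cancels the first contribution; when $n \ge m$, the dual sums contribute $0$ and the remaining term $\langle u, P_n\rangle$ vanishes by hypothesis (a). Thus $\langle v, P_n\rangle = 0$ for every $n$, and the remark that any functional annihilating all $P_n$ is zero forces $v=0$. This gives $u = \sum_{\nu=0}^{m-1}\langle u, P_\nu\rangle u_\nu$, and setting $\lambda_\nu := \langle u, P_\nu\rangle$ yields the desired expansion with $\lambda_{m-1}=\langle u, P_{m-1}\rangle \neq 0$.

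I do not expect any real obstacle here: the whole argument is a bookkeeping exercise on the biorthogonality $\langle u_\nu, P_n\rangle = \delta_{\nu,n}$. The only point that needs to be invoked carefully is the uniqueness statement (the fact that $\langle u, P_n\rangle = 0$ for all $n$ implies $u=0$), which is why I would be explicit about checking $\langle v, P_n\rangle = 0$ on both ranges $n<m$ and $n\ge m$ separately before concluding $v=0$.
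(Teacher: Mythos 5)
Your argument is correct and complete: both directions follow from the biorthogonality $\langle u_\nu, P_n\rangle=\delta_{\nu,n}$, and the key step --- introducing $v=u-\sum_{\nu=0}^{m-1}\langle u,P_\nu\rangle u_\nu$ and invoking the fact that a functional annihilating every $P_n$ must vanish --- is exactly the standard proof of this classical lemma. The paper itself gives no proof (it simply cites Maroni's \emph{Variations around classical orthogonal polynomials}), and your write-up reproduces the argument found there, so there is nothing to fix.
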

Given $\varpi \in \mathcal{P}$ and $u \in \mathcal{P}'$, the form $\varpi u$, called the
left-multiplication of $u$ by the polynomial $\varpi$, is defined by 
\begin{equation}\label{poly-times-u}
\langle \varpi u,p \rangle= \langle u,\varpi p \rangle,\:\:\: \forall p \in \mathcal{P},
\end{equation}
and the transpose of the derivative operator on $\mathcal{P}$ defined by $p \rightarrow (Dp)(x)=p^{\prime}(x),$ is the following (cf. \cite{theoriealgebrique}):
\begin{equation}\label{funcionalDu}
u \rightarrow Du:\;\;\langle Du,p \rangle=-\langle u,p^{\prime} \rangle ,\:\:\: \forall p \in \mathcal{P},\end{equation}
so that we can retain the usual rule of the derivative of a product when applied to the left-multiplication of a form by a polynomial. Indeed, it is easily established that 
\begin{align}
\label{derivada do produto} &D(pu)=p^{\prime}u+pD(u).
\end{align}
\begin{definition} \cite{theoriealgebrique,euler} \label{orthogonality definition}
A PS ${\{P_{n}\}}_{n \geq 0}$ is regularly orthogonal with respect to the form $u$ if and only if it fulfils 
\begin{align}
\label{ortogonal} &\langle u,P_{n}P_{m} \rangle=0,\:\: n \neq m,\:\:\:\:\: n, m \geq 0,\\
\label{ortogonal regular} & \langle u,P_{n}^{2} \rangle \neq 0, \: n \geq 0.
\end{align}
Then, the form $u$ is said to be regular (or quasi-definite) and ${\{P_{n}\}}_{n \geq 0}$ is an orthogonal polynomial sequence (OPS). The conditions (\ref{ortogonal}) are called the orthogonality conditions and the conditions (\ref{ortogonal regular}) are called the regularity conditions.
\end{definition}
We can normalize ${\{P_{n}\}}_{n \geq 0}$ so that it becomes monic, so that it is unique and we briefly note it as a MOPS. Considering the corresponding dual sequence ${\{u_{n}\}}_{n \geq 0}$, it holds $u=\lambda u_{0},$ with $\lambda=(u)_{0} \neq 0$.
\begin{lemma}\cite{euler}\label{phi-u=0}
Let $u$ be a regular form and $\phi$ a polynomial such that $\phi u=0$. Then $\phi=0$.
\end{lemma}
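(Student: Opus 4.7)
The plan is to assume $\phi\neq 0$ and derive a contradiction from the regularity of $u$, using the MOPS $\{P_{n}\}_{n\geq 0}$ orthogonal with respect to $u$ (which exists precisely because $u$ is regular).

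First I would set $m=\deg\phi$ and, since the MOPS $\{P_{n}\}_{n\geq 0}$ is a basis of $\mathcal{P}$, expand
\[
\phi(x)=\sum_{k=0}^{m}a_{k}P_{k}(x),\qquad a_{m}\neq 0.
\]
The assumption $\phi u=0$ means, by the definition \eqref{poly-times-u} of the left-multiplication, that $\langle u,\phi p\rangle=0$ for every $p\in\mathcal{P}$. The key step is then to test this identity against the polynomial $P_{m}$: using the expansion above and the orthogonality conditions \eqref{ortogonal}, all cross terms $\langle u,P_{k}P_{m}\rangle$ with $k<m$ vanish, leaving
\[
0=\langle u,\phi P_{m}\rangle=\sum_{k=0}^{m}a_{k}\langle u,P_{k}P_{m}\rangle=a_{m}\,\langle u,P_{m}^{2}\rangle.
\]
The regularity condition \eqref{ortogonal regular} gives $\langle u,P_{m}^{2}\rangle\neq 0$, forcing $a_{m}=0$, a contradiction with $\deg\phi=m$. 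Hence $\phi=0$.

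There is essentially no obstacle here: the argument is short and relies only on the existence of a MOPS associated with the regular form $u$, on the definition of left-multiplication by a polynomial, and on the orthogonality/regularity conditions that characterize such a form. The only point that deserves care is the initial remark that $\phi$ can indeed be expanded on the basis $\{P_{n}\}_{n\geq 0}$, which is guaranteed by the fact that a MPS is a basis of $\mathcal{P}$.
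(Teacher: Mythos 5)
Your argument is correct: since the paper's Definition \ref{orthogonality definition} makes regularity of $u$ synonymous with the existence of an orthogonal PS for $u$, expanding $\phi$ in the MOPS basis and pairing against $P_{m}$ isolates $a_{m}\langle u,P_{m}^{2}\rangle$, and the regularity condition \eqref{ortogonal regular} forces $a_{m}=0$. The paper itself states this lemma without proof, citing \cite{euler}; your proof is the standard one and there is nothing to object to.
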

\begin{theorem}\cite{variations}\label{regular}
Let ${\{P_{n}\}}_{n \geq 0}$ be a MPS and ${\{u_{n}\}}_{n \geq 0}$ its dual sequence. 
The following statements are equivalent:
\begin{description}
  \item[a)]  The sequence ${\{P_{n}\}}_{n \geq 0}$ is orthogonal (with respect to $u_{0}$);
  \item[b)] $\chi_{n,k}=0$,  $\;0\leq k \leq n-1,\;\;\;\; n\geq 1; \;\;\;\chi_{n,n}\neq 0,\; \; n \geq
  0$;
  \item[c)]$xu_{n}=u_{n-1}+\beta_{n}u_{n}+\chi_{n,n}u_{n+1}$, $\chi_{n,n}\neq
  0,\;\;\; n \geq 0$, $u_{-1}=0$;
  \item[d)] For each $n \geq 0$, there is a polynomial  $\phi_{n}$ with $\deg(\phi_{n})=n$ such that $u_{n}=\phi_{n}u_{0}$;
  \item[e)] $u_{n}=\Big(<u_{0},P_{n}^{2}>\Big)^{-1}P_{n}u_{0}$, $n \geq 0$;
\end{description}
where $\beta_{n}$ and $\chi_{n,k}$ are defined by (\ref{betas}-\ref{quisnus}).
\end{theorem}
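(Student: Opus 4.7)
The plan is to establish the five equivalences via the chain (a)$\Leftrightarrow$(e)$\Rightarrow$(d)$\Rightarrow$(b)$\Leftrightarrow$(c)$\Rightarrow$(e), so that the loop closes through~(e). Throughout, Lemma~\ref{lema1} serves as the engine, converting sparse information about the pairings $\langle u, P_k\rangle$ into a finite representation of $u$ in the dual basis $\{u_k\}_{k\geq 0}$. For the pivotal equivalence (a)$\Leftrightarrow$(e), I would apply Lemma~\ref{lema1} to the form $P_n u_0$: under orthogonality, $\langle P_n u_0, P_m\rangle = \langle u_0, P_n P_m\rangle$ vanishes for $m\neq n$ and equals the nonzero $\langle u_0, P_n^2\rangle$ for $m=n$, so Lemma~\ref{lema1} yields $P_n u_0 = \langle u_0, P_n^2\rangle\, u_n$, which is exactly~(e); the converse reads (a) off (e) by substitution into $\langle u_n, P_m\rangle = \delta_{nm}$, while (e)$\Rightarrow$(d) is immediate with $\phi_n = \langle u_0, P_n^2\rangle^{-1} P_n$.

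For (d)$\Rightarrow$(b), I would exploit \eqref{quisnus}. Writing $u_\nu = \phi_\nu u_0$ and expanding $x\phi_\nu = \sum_{k=0}^{\nu+1} a_k^{(\nu)}\phi_k$ in the polynomial basis $\{\phi_k\}_{k\geq 0}$ (with $a_{\nu+1}^{(\nu)}\neq 0$), I obtain
$$\chi_{n,\nu} = \langle u_0, x\phi_\nu P_{n+1}\rangle = \sum_{k=0}^{\nu+1} a_k^{(\nu)}\,\langle u_k, P_{n+1}\rangle = \sum_{k=0}^{\nu+1} a_k^{(\nu)}\,\delta_{k,n+1},$$
which vanishes for $\nu\leq n-1$ and equals $a_{n+1}^{(n)}\neq 0$ for $\nu=n$. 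For (b)$\Leftrightarrow$(c) I would expand $xP_k$ via \eqref{divisao} and compute $\langle xu_n, P_k\rangle = \delta_{n,k+1} + \beta_k\delta_{n,k}$, augmented by $\chi_{k-1,n}$ when $k\geq n+1$; under (b) this last contribution vanishes for $k\geq n+2$, so Lemma~\ref{lema1} assembles $xu_n = u_{n-1}+\beta_n u_n + \chi_{n,n} u_{n+1}$, and evaluating (c) against $P_k$ for $k\geq n+2$ conversely forces $\chi_{k-1,n}=0$.

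To close the cycle I would prove (c)$\Rightarrow$(e) by induction. The dual recurrence $\chi_{n,n}\,u_{n+1} = (x-\beta_n) u_n - u_{n-1}$ parallels the three-term polynomial recurrence $P_{n+1} = (x-\beta_n) P_n - \chi_{n-1,n-1} P_{n-1}$ that (b) extracts from~\eqref{divisao}; with $c_n := \prod_{k=0}^{n-1}\chi_{k,k}$, an induction shows that $c_n^{-1} P_n u_0$ satisfies the same recursion with the same initial data as $u_n$, forcing equality, and the normalization is pinned down by $1=\langle u_n, P_n\rangle = c_n^{-1}\langle u_0, P_n^2\rangle$, giving $c_n = \langle u_0, P_n^2\rangle$ and thus the precise formula in~(e). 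The main obstacle lies in this bookkeeping: one must verify that the product $\prod\chi_{k,k}$ emerging from the recurrence coincides with $\langle u_0, P_n^2\rangle$, so that the cycle closes with the exact constant asserted in~(e); everything else reduces to routine applications of \eqref{poly-times-u} and Lemma~\ref{lema1}.
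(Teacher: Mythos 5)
The paper does not prove this theorem: it is quoted from the reference \cite{variations} and used as a known tool, so there is no in-paper argument to compare against. Your proof is correct and is essentially the standard one from that reference: the cycle (a)$\Leftrightarrow$(e)$\Rightarrow$(d)$\Rightarrow$(b)$\Leftrightarrow$(c)$\Rightarrow$(e) closes properly, each application of Lemma~\ref{lema1} is legitimate (the hypotheses $\langle u,P_{m-1}\rangle\neq 0$ and $\langle u,P_n\rangle=0$ for $n\geq m$ are verified where needed), and the induction for (c)$\Rightarrow$(e) correctly identifies $\prod_{k=0}^{n-1}\chi_{k,k}$ with $\langle u_0,P_n^2\rangle$ via the normalization $\langle u_n,P_n\rangle=1$. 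The only point worth flagging is minor: in (c)$\Rightarrow$(e) you invoke the truncated three-term recurrence for the $P_n$, which presupposes (b); this is harmless since you have already established (c)$\Rightarrow$(b), but it deserves an explicit mention so the logical order is unambiguous.
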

Let ${\{P_{n}\}}_{n \geq 0}$ be a MOPS. From statement $b)$ of Theorem \ref{regular}, the structure relation (\ref{divisao})
becomes the following second order recurrence relation: 
\begin{align}\label{recurrencia de ordem dois}
&P_{0}(x)=1,\;\;P_{1}(x)=x-\beta_{0},\\
&P_{n+2}(x)=(x-\beta_{n+1})P_{n+1}(x)-\gamma_{n+1}P_{n}(x),\;\;\; n \geq 0,
\end{align}
where $\gamma_{n+1}=\chi_{n,n} \neq 0,\;\; n \geq 0$, and also by item $e)$, we have:
\begin{equation}\label{betas e gammas}
\beta_{n}=\frac{\langle u_{0}, xP_{n}^{2}(x) \rangle}{ \langle u_{0}, P_{n}^{2}(x) \rangle},\;\;\: \:\:\:\gamma_{n+1}=\frac{ \langle u_{0}, P_{n+1}^{2}(x) \rangle}{ \langle u_{0}, P_{n}^{2}(x) \rangle},
\end{equation}
being the regularity conditions (\ref{ortogonal regular}) fulfilled if and only if $\gamma_{n+1} \neq 0,\:\: n \geq 0$. Notice also that $\gamma_{1}\ldots\gamma_{n}= \prod_{i=1}^{n} \gamma_{i} =\langle u_{0}, P_{n}^{2}(x) \rangle,\; n \geq 1$.
\newline The use of suitable affine transformations requires the use of the following operators on $\mathcal{P}$ \cite{theoriealgebrique}:
\begin{align*}
&p \rightarrow \tau_{B}\,p(x)=p(x-B),\:\: B \in \mathbb{C},\\
&p \rightarrow h_{A}\,p(x)=p(Ax),\:\: A \in \mathbb{C} \backslash \{0\}.
\end{align*}
Transposing, we obtain the corresponding operators on $\mathcal{P}^{\prime}$.
\begin{align*}
&u \rightarrow \tau_{b}u:\;\;<\tau_{b}u,p>=<u,\tau_{-b}p>=<u,p(x+b)>,\:\:\: \forall p \in \mathcal{P},\\
&u \rightarrow h_{a}u:\;\;<h_{a}u,p>=<u,h_{a}p>=<u,p(ax)>,\:\:\: \forall p \in \mathcal{P}.
\end{align*}
Hence, given $A \in \mathbb{C}\backslash \{0\}$ and $B \in \mathbb{C}$, and a MPS ${\{P_{n}\}}_{n \geq 0}$ we may define the outcome of an affine transformation denoted by ${\{\tilde{P}_{n}\}}_{n \geq 0}$  as follows:
\begin{equation}\label{shifted sequence}
\tilde{P}_{n}(x)=A^{-n}P_{n}(Ax+B), \:\: n \geq 0.
\end{equation}
with dual sequence \cite{variations}:
$$\tilde{u}_{n}=A^{n}(h_{A^{-1}}\circ \tau_{-B})u_{n}.$$
In particular, if ${\{P_{n}\}}_{n \geq 0}$ is a MOPS, then the MPS defined by \eqref{shifted sequence} is orthogonal 
 and its recurrence coefficients are 
 \begin{equation}\label{shifted-coef}
 \widetilde{\beta}_{n}=\frac{\beta_{n}-B}{A},\;\; \widetilde{\gamma}_{n+1}=\frac{\gamma_{n+1}}{A^{2}},\;\;\;n \geq 0.
 \end{equation}
Finally, we recall that a MPS ${\{P_{n}\}}_{n \geq 0}$ is called classical, if and only if it satisfies the Hahn\'{}s property \cite{Hahn}, that is to say, the MPS ${\{P^{[1]}_{n}\}}_{n \geq 0}$, defined by $P^{[1]}_{n}(x):=(n+1)^{-1}DP_{n+1}(x)$, is also orthogonal. The classical polynomials are divided into four classes: Hermite, Laguerre, Bessel and Jacobi \cite{Chihara}, and characterized by the functional equation
\begin{equation}\label{classical-equation}
D(\phi u)+\psi u =0,
\end{equation}
where $\psi$ and $\phi$ are two polynomials such that: $\deg \psi =1$, $\deg \phi \leq 2$, $\phi$ is normalized and $\psi^{\prime}-\frac{1}{2}\phi^{\prime\prime}n \neq 0,\: n \geq 1$ \cite{euler}. In fact, since $\phi$ cannot be identically zero, otherwise $u_{0}$ would not be regular, we consider it monic and the same for the form $u$, that is, $(u)_{0}=1$. For example, the polynomials $\phi(x)=x$ and $\psi(x)=x-\alpha-1$, with parameter $\alpha \notin \mathbb{Z}^{-}$, correspond to the Laguerre polynomials.
\newline Furthermore, when we apply an affine transformation to a classical MOPS, orthogonal with respect to $u_{0}$, as written in \eqref{shifted sequence}, we obtain also a classical MOPS orthogonal with respect to the form $\tilde{u}_{0}$, defined by $\tilde{u}_{0}=(h_{A^{-1}}\circ \tau_{-B})u_{0}$ and belonging to the same class \cite{theoriealgebrique,euler}. In addition, $\tilde{u}_{0}$ fulfils $D(\tilde{\phi} u)+\tilde{\psi} u =0$ where \cite{theoriealgebrique}
\begin{align}\label{phi-psi-affine} 
\tilde{\phi}(x)=A^{-t}\phi\left(  Ax+B \right),\quad \tilde{\psi}(x)=A^{1-t}\psi\left(  Ax+B \right),\quad t=\deg\left(\phi \right).   
\end{align}

\section{Operators on $\mathcal{P}$ and technical identities}\label{operator}

Given a sequence of polynomials $\{a_{\nu}(x)\}_{ \nu \geq 0}$, let us consider the following linear mapping $J: \mathcal{P} \rightarrow \mathcal{P}$ (cf. \cite{Korean}, \cite{Pincherle}).
\begin{equation} \label{operatorJ}
J= \sum_{\nu \geq 0} \frac{a_{\nu}(x)}{\nu!} D^{\nu},\quad \deg a_{\nu} \leq \nu,\quad \nu \geq 0.
\end{equation}
Expanding $a_{\nu}(x)$ as follows:
$$a_{\nu}(x)=\sum_{i=0}^{\nu} a_{i}^{[\nu]}x^{i}, $$
and in view of $ D^{\nu}\left( \xi ^n \right) (x)= \frac{n!}{(n-\nu)!} x^{n-\nu} $, we get the next useful identities about $J$:
\begin{eqnarray}
\label{Jx^n-short}&& J\left( \xi^n \right) (x) = \sum_{\nu= 0}^{n} a_{\nu} (x) 
\binom{n}{\nu} x^{n-\nu}\\
\label{Jx^n} && J\left( \xi^n \right) (x) = \sum_{\tau= 0}^{n} \left( \sum_{\nu=0}^{\tau}\binom{n}{n-\nu} a_{\tau-\nu}^{[n-\nu]} \right) x^{\tau},\quad n \geq 0.
\end{eqnarray}
In particular, a linear mapping $J$ is an isomorphism if and only if 
\begin{equation}\label{iso-conditions}
\deg \left( J\left( \xi^n  \right)(x) \right)= n\;, \;\; n \geq 0, \; \; \textrm{and}\;\; J\left( 1  \right)(x) \neq 0.
\end{equation}

\begin{lemma}\label{lemmaPascal}
For any linear mapping $J$, not increasing the degree, there exists a unique sequence of polynomials $\{a_{n}\}_{n \geq 0}$, with $\deg a_{n} \leq n$, so that $J$ is read as in \eqref{operatorJ}. Further, the linear mapping $J$ is an isomorphism of $\mathcal{P}$ if and only if
\begin{equation}\label{lambdan}
 \sum_{\mu=0}^{n}\binom{n}{\mu} a_{\mu}^{[\mu]} \neq 0 , \quad n \geq 0. 
 \end{equation}
\end{lemma}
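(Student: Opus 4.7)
The plan is to establish the two parts separately: first construct the sequence $\{a_n\}_{n\geq 0}$ by a straightforward triangular recursion forced by \eqref{Jx^n-short}, and then identify the leading coefficient of $J(\xi^n)$ using \eqref{Jx^n} so that the isomorphism criterion \eqref{iso-conditions} translates directly into \eqref{lambdan}.

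For existence and uniqueness, I would argue by induction on $n$. Setting $n=0$ in \eqref{Jx^n-short} forces $a_0(x)=J(1)(x)$, which is a polynomial of degree at most $0$ since $J$ does not increase the degree. For $n\geq 1$, assuming $a_0,\dots,a_{n-1}$ have already been produced with $\deg a_\nu\leq \nu$, the identity \eqref{Jx^n-short} leaves exactly one unknown, namely $a_n(x)$, and I would define it by
\begin{equation*}
a_n(x) \;:=\; J(\xi^n)(x)\;-\;\sum_{\nu=0}^{n-1}\binom{n}{\nu}\,a_\nu(x)\,x^{n-\nu}.
\end{equation*}
The degree bound $\deg a_n\leq n$ follows at once: $J(\xi^n)$ has degree at most $n$ by hypothesis on $J$, and every summand subtracted has degree at most $\nu+(n-\nu)=n$. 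Uniqueness is automatic from the same triangular recursion, because two candidate sequences would differ by an operator $\sum_{\nu\geq 0}(b_\nu/\nu!)D^\nu$ that annihilates every $\xi^n$, and evaluating successively at $n=0,1,2,\dots$ gives $b_0\equiv 0, b_1\equiv 0,\dots$.

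For the second part, I would read off the leading coefficient of $J(\xi^n)$ from \eqref{Jx^n}: the coefficient of $x^n$ is the $\tau=n$ term, namely
\begin{equation*}
\sum_{\nu=0}^{n}\binom{n}{n-\nu}\,a_{n-\nu}^{[n-\nu]}
\;=\;\sum_{\mu=0}^{n}\binom{n}{\mu}\,a_{\mu}^{[\mu]},
\end{equation*}
after the reindexing $\mu=n-\nu$. According to \eqref{iso-conditions}, $J$ is an isomorphism precisely when $\deg(J(\xi^n))=n$ for every $n\geq 0$, i.e. when the displayed leading coefficient does not vanish; this is exactly condition \eqref{lambdan}. (The separate requirement $J(1)\neq 0$ is simply the $n=0$ instance, so nothing extra needs to be checked.)

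I do not foresee any real obstacle: the argument is essentially bookkeeping around \eqref{Jx^n-short}--\eqref{Jx^n}. The only step where care is needed is the index reshuffling that pulls the leading coefficient out of the double sum in \eqref{Jx^n}, confirming that the inner sum at $\tau=n$ is precisely $\sum_{\mu=0}^{n}\binom{n}{\mu}a_\mu^{[\mu]}$; once that is verified, both implications in the isomorphism criterion follow immediately from the definition of an isomorphism between graded pieces of $\mathcal{P}$.
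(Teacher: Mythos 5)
Your proof is correct and follows essentially the same route as the paper: both solve the triangular system forced by the images $J(\xi^n)$, the paper doing so coefficient-by-coefficient via \eqref{Jx^n} (yielding the explicit recurrence \eqref{recurrence of J}) while you do it polynomial-by-polynomial via \eqref{Jx^n-short}. Your identification of the leading coefficient of $J(\xi^n)(x)$ with $\sum_{\mu=0}^{n}\binom{n}{\mu}a_{\mu}^{[\mu]}$ correctly supplies the isomorphism criterion \eqref{lambdan}, a step the paper's proof leaves implicit.
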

\begin{proof}
The linear mapping is well determined by the data of images $J\left( \xi^n  \right)(x) = D_{n}(x),\; n \geq 0$, where $D_{n}(x)=\displaystyle\sum_{\tau=0}^{n} d_{\tau}^{[n]} x^{\tau}$.
Comparing with \eqref{Jx^n}, we obtain
\begin{align*}
& d_{\tau}^{[n]} = \sum_{\nu=0}^{\tau}  \binom{n}{n-\nu} a_{\tau-\nu}^{[n-\nu]}.
\end{align*}
Therefore, for given coefficients $d_{\tau}^{[n]} ,\;  0\leq \tau \leq n ,\; n \geq 0$, the recurrence relation
\begin{equation}\label{recurrence of J}
a_{\tau}^{[n]} = d_{\tau}^{[n]}-\sum_{\nu=1}^{\tau}  \binom{n}{n-\nu} a_{\tau-\nu}^{[n-\nu]},\quad 1 \leq \tau \leq n,
\end{equation}
with initial conditions $a_{0}^{[n]} = d_{0}^{[n]} , \; n \geq 0$, establishes the computation of a unique set of coefficients $a_{\tau}^{[n]},\; 0\leq \tau \leq n ,$ for any $n \geq 0$.
\end{proof}

For any given operator $J$, not increasing the degree, we may calculate through \eqref{recurrence of J} its development in a $J$ format. This task can be accomplished by cumbersome calculations, but also by the use of a   
computer algebra software like \textit{Mathematica} \cite{wolfram}, where \eqref{recurrence of J} has a simple redaction and the first $nmax$ polynomial coefficients $a_{n}(x),\; n=0,\ldots,nmax,$ are easily calculated, for a chosen positive integer $nmax$. The next examples were derived from both processes.

\begin{example}
 When $J=D$, we have $a_{n}(x)=\delta_{n,1},\; n \geq 0$.
\end{example}
\begin{example}
If $J=DxD$, then $J\left(\xi^n \right)(x)=n^2 x^{n-1},\; n \geq 0$. Consequently, we have $a_{0}(x)=0$, $a_{1}(x)=1$, $a_{2}(x)=2x$, $a_{n}(x)=0,\; n \geq 3$. Thus, $J=D+xD^2$.
\end{example}
\begin{example}\label{affineJ}
When $J=s\, \left(h_{A} \circ \tau_{-B} \right)$, $ s \neq 0$, we have $J\left(\xi^n \right)(x)=s \left( Ax+B\right)^{n},$ $\; n \geq 0$, therefore from \eqref{recurrence of J}, we easily obtain $a_{n}(x)=s \left( \left(A-1\right)x+B \right)^{n},\; n \geq 0$. Then $J = s \displaystyle\sum_{n \geq 0}  \frac{ \left( \left(A-1\right)x+B \right)^{n}}{n!}D^{n}$.
\end{example}
\begin{example}\label{DvarpiJ}
If  $J=D_{\varpi}$, where $\left( D_{\varpi}f \right)(x)=\varpi^{-1}\left( f(x + \varpi) -f(x)\right),$ $ f \in \mathcal{P}$, $\varpi \in \mathbb{C}-\{ 0\}$ \cite{Dw-classical}, then 
$$J\left( 1 \right) (x)=0\, , \quad J\left( \xi^n \right) (x)= \sum_{\tau=0}^{n-1} \binom{n}{\tau} \varpi^{n-1-\tau} x^{\tau}\,,\quad n \geq 1,$$
implying $a_{0}(x)=0$ and $a_{n}(x)=\varpi^{n-1},\; n \geq 1$, so that
$$J= \sum_{n \geq 0} \frac{\varpi^{n}}{(n+1)!}D^{n+1}.$$
\end{example}
\begin{example}\label{HqJ}
Let us consider  $J=H_{q}$, where $\left( H_{q}f \right)(x)=(q-1)^{-1}x^{-1}\left( f(qx) -f(x)\right),$ $ f \in \mathcal{P}$, $q \in \mathbb{C}-\bigcup_{n \geq 0} U_{n}$, with $U_{0}=\{ 0 \}$,  $U_{n}=\{ z \in \mathbb{C} \,|\, z^{n} =1 \},\; n \geq 1$ \cite{Hq-classical}. Then $J\left( \xi^n \right) (x)= \dfrac{q^n-1}{q-1} x^{n-1}\,,\; n \geq 0$, and some computations yield $a_{0}(x)=0$ and $a_{n}(x)=(q-1)^{n-1} x^{n-1}\,,\; n \geq 1$. Consequently
$$J= \sum_{n \geq 0} \frac{ (q-1)^{n}}{(n+1)!}x^{n}D^{n+1}.$$
\end{example}
\begin{example}\label{IqomegaJ}
Considering  $J=I_{(q,\omega)}$, where $\left( I_{(q,\omega)} f \right)(x)=f(x) + \omega f(qx),$ $ f \in \mathcal{P}$, $\omega \in \mathbb{C}-\{ 0\}$,  $q \in \mathbb{C}_{\omega}$ with  $\mathbb{C}_{\omega}=\{ z \in \mathbb{C} :  z \neq 0, z^{n+1} \neq 1,\; 1+\omega z^{n} \neq 0,\; n \in  \mathbb{N} \}$ \cite{Iqw-classical}, we get  $a_{0}(x)=1 +\omega$, $a_{n}(x)=\omega \left( q-1  \right)^n x^n\, , \; n \geq 1$, and 
 $$J= I+ \omega \sum_{n \geq 0} \frac{ (q-1)^{n}}{n!}x^{n}D^{n}.$$
\end{example}

\medskip
Coming back to a generic operator $J$, by duality and in view of \eqref{funcionalDu}, we have
\begin{align*}
\langle ^{t}J(u), f  \rangle &= \langle u, J(f) \rangle \;, \quad u \in \mathcal{P}^{\prime},\quad f \in \mathcal{P},\\
& = \sum_{n \geq 0} \langle u, \frac{a_{n}(x)}{n!}f^{(n)}(x) \rangle = \sum_{n \geq 0} \frac{(-1)^n}{n!} \langle D^{n}\left(a_{n}u\right), f \rangle ;
\end{align*} 
thence
\begin{equation}\label{transpose-J}
^{t}J(u) = \sum_{n \geq 0} \frac{(-1)^n}{n!}  D^{n}\left(a_{n}u\right), \quad u \in \mathcal{P}^{\prime}.
\end{equation}
We will denote the transposed operator $^{t}J(u)$ more simply by $J(u)$, since in each context it is distinguishable from $J$ on $\mathcal{P}$.

Still regarding the operator $J$ and its underlying sequence of polynomials $\{a_{n}\}_{n\geq 0}$,  let us now consider the following sum.
\begin{equation}
J(x;z) := \sum_{n \geq 0} \frac{a_{n}(x)}{n!}  z^{n} .
\end{equation}
Differentiating with respect to $z$, we obtain
\begin{equation}
J^{\prime}(x;z) := \frac{\partial}{\partial z} J(x; z) = \sum_{n \geq 0 } \frac{a_{n+1}(x)}{n!}z^n ;
\end{equation}
and more generally, we may define
\begin{equation}
J^{(m)}(x;z) = \sum_{n \geq 0 } \frac{a_{n+m}(x)}{n!}z^n , \quad m \geq 0.
\end{equation}
The transpose operator of the operator on $\mathcal{P}$ defined by $J^{(m)}= \displaystyle \sum_{n \geq 0 } \frac{a_{n+m}(x)}{n!}D^{n}$ is the following.
\begin{equation}\label{J^(m)}
J^{(m)}(u) = \sum_{n \geq 0 } \frac{(-1)^n}{n!}D^n\left( a_{n+m} u \right), \quad m \geq 0.
\end{equation}

\begin{example}
With respect to the examples above, we get:
\begin{align*}
& J(x;z) = s\, \exp\left( \left(  \left(A-1 \right)x+B \right)z\right)\, , \quad \textrm{for the operator of example} \;\; \ref{affineJ} , \\
& J(x;z) = \varpi^{-1}\left( \exp\left(  \varpi z \right) -1  \right)\, , \quad \textrm{for the operator of example} \;\; \ref{DvarpiJ} , \\
& J(x;z) = \left( (q-1) x \right)^{-1}\left( \exp\left(  (q-1)x z \right) -1  \right)\, , \quad \textrm{for the operator of example} \;\; \ref{HqJ}, \\
&  J(x;z) = 1 + \omega \exp\left(  (q-1)x z \right)\, , \quad \textrm{for the operator of example} \;\; \ref{IqomegaJ}.
\end{align*}
\end{example}

\begin{lemma} For any $f,g \in \mathcal{P}$, $u \in \mathcal{P}^{\prime}$, we have:
\begin{align}
\label{J(fg)}&J\left( fg \right)(x)= \sum_{n \geq 0 } J^{(n)}\left(f\right)(x)  \frac{g^{(n)}(x)}{n!} = \sum_{n \geq 0 } J^{(n)}\left(g\right)(x)  \frac{f^{(n)}(x)}{n!},\\
\label{J(fu)}&J\left( fu  \right) = \sum_{n \geq 0 } \frac{(-1)^n}{n!}f^{(n)} J^{(n)}(u).
\end{align}
If we take another operator $K=\displaystyle  \sum_{n \geq 0 } \frac{b_{n}(x)}{n!}D^{n}$, we may then consider
\begin{align}
&K \circ J = \sum_{n \geq 0} \frac{c_{n}(x)}{n!}D^{n}, \quad \textrm{where}
\end{align}
\begin{equation}\label{composed-coeff}
c_{n}(x)= \displaystyle \sum_{\nu =0}^{n} \binom{n}{\nu} \sum_{\mu=0}^{\nu} \frac{b_{n-\nu+\mu}(x)}{\mu!} a_{\nu}^{\left(\mu\right)}(x)= \sum_{\mu=0}^{n} b_{\mu}(x) \sum_{\nu=0}^{\mu} \binom{n}{\nu} \frac{a_{n-\nu}^{(\mu-\nu)}(x)}{(\mu-\nu)!}, \; n \geq 0.
\end{equation}
\end{lemma}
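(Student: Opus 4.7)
The three identities are essentially bookkeeping exercises in the expansion of $J$ as a formal differential operator, so my plan is to prove each one by a direct computation followed by a careful reindexation. I would expect the main obstacle to be organizing the change of summation indices cleanly, especially for the composition formula.

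For \eqref{J(fg)}, I would start from the definition $J(fg) = \sum_{n \geq 0} \frac{a_n(x)}{n!}\, D^n(fg)$ and apply the Leibniz rule $D^n(fg) = \sum_{k=0}^{n}\binom{n}{k} f^{(k)} g^{(n-k)}$. Then I would substitute $m = n-k$, which gives
\[
J(fg)(x) = \sum_{k \geq 0}\sum_{m \geq 0} \frac{a_{k+m}(x)}{k!\,m!}\, f^{(k)}(x)\, g^{(m)}(x),
\]
and factor out $f^{(k)}/k!$ to recognize the inner $m$-sum as $J^{(k)}(g)(x)$ by the definition of $J^{(m)}$. By symmetry of the double sum under swapping $(f,g)$, the second equality follows simultaneously.

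For \eqref{J(fu)}, I would work from the transposed formula \eqref{transpose-J} and write $J(fu) = \sum_{n \geq 0} \frac{(-1)^n}{n!}\, D^n(a_n f\, u)$. Applying the Leibniz rule for distributions, $D^n(f \cdot a_n u) = \sum_{k=0}^{n}\binom{n}{k} f^{(k)} D^{n-k}(a_n u)$, and setting $m=n-k$, I obtain
\[
J(fu) = \sum_{k \geq 0} \frac{(-1)^k}{k!}\, f^{(k)} \sum_{m \geq 0}\frac{(-1)^m}{m!}\, D^{m}\!\left(a_{k+m}\, u\right),
\]
after regrouping the sign $(-1)^{k+m}$. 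The inner sum is precisely $J^{(k)}(u)$ by \eqref{J^(m)}, giving the claim.

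For the composition formula, I would compute $(K\circ J)(p) = \sum_{n,\nu \geq 0} \frac{b_n(x)}{n!\,\nu!}\, D^n\!\bigl(a_\nu D^\nu p\bigr)$, apply Leibniz once more to get $D^n(a_\nu D^\nu p) = \sum_{\mu=0}^{n}\binom{n}{\mu} a_\nu^{(\mu)} D^{n-\mu+\nu}(p)$, and then collect the coefficient of $D^m(p)$ by imposing $m = n-\mu+\nu$, i.e.\ $n = m+\mu-\nu$. Using $\binom{m+\mu-\nu}{\mu} = \frac{(m+\mu-\nu)!}{\mu!\,(m-\nu)!}$, the $(m+\mu-\nu)!$ cancels against $n!$, and after factoring $\frac{1}{m!}$ the coefficient becomes $\frac{c_m(x)}{m!}$ with $c_m$ as in the first expression of \eqref{composed-coeff}; the constraint $\nu \leq m$ falls out naturally from $\mu \geq 0$. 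To derive the second expression, I would reindex by setting $\mu'= m-\nu+\mu$ so that $b_{\mu'}$ is the free label; the range $\mu' \leq m$ is enforced automatically because $\deg a_{m-\nu} \leq m-\nu$ forces $a_{m-\nu}^{(\mu'-\nu)}=0$ whenever $\mu' > m$, which makes the upper limit $\mu \leq n$ in the second formula genuine rather than artificial. The main delicacy, and the only step I would double-check, is that the factorials and binomial coefficients conspire correctly under this last reindexing.
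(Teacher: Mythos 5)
Your proposal is correct and follows essentially the same route as the paper: expand $J$, apply the Leibniz rule (for polynomials, for the form $a_n u$, and for the composition), and reindex to recognize $J^{(n)}$ or to collect the coefficient of $D^m$. The only quibble is a harmless misattribution in the composition step: the constraint $\nu\leq m$ comes from the Leibniz range $\mu\leq n=m-\nu+\mu$ rather than from $\mu\geq 0$, but this does not affect the argument.
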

\begin{proof} The following calculations justify \eqref{J(fg)}.
\begin{align*}
J\left( fg \right)(x) &= \sum_{\nu \geq 0 }  \frac{a_{\nu}(x)}{\nu!} \left( fg  \right)^{(\nu)}(x)  =  \sum_{\nu \geq 0 }  \frac{a_{\nu}(x)}{\nu!} \sum_{\mu=0}^{\nu}  \binom{\nu}{\mu} f^{\left( \mu \right)}(x) g^{\left(\nu-\mu  \right)}(x) \\
&= \sum_{\mu \geq 0 } f^{\left( \mu  \right)}(x) \sum_{\nu \geq \mu} \frac{a_{\nu}(x)}{\nu!} \binom{\nu}{\mu} g^{\left( \nu-\mu  \right)}(x)\\
&= \sum_{\mu \geq 0 } f^{\left( \mu  \right)}(x) \sum_{\tau \geq 0} \frac{a_{\mu+\tau}(x)}{\left( \mu+\tau \right)!} \binom{\mu + \tau}{\mu} g^{\left( \tau  \right)}(x)\\
  & = \sum_{\mu \geq 0 }\left(  \sum_{\tau \geq 0 } \frac{a_{\mu+\tau}(x)}{\tau!}  g^{\left(\tau  \right)}(x)  \right) \frac{f^{\left( \mu  \right)}(x)}{\mu!} = \sum_{\mu \geq 0 } J^{\left( \mu  \right)}\left( g \right)\frac{f^{\left( \mu  \right)}(x)}{\mu!} .
\end{align*}
Next, we have
\begin{align*}
J\left( f u  \right) & = \sum_{\nu \geq 0 } \frac{(-1)^{\nu}}{\nu!} D^{\nu}\left(a_{\nu} f u \right) \\
 & = \sum_{\nu \geq 0} \frac{(-1)^{\nu}}{\nu!} \sum_{\mu=0}^{\nu} \binom{\nu}{\mu} f^{\left(\nu-\mu  \right)}\left(a_{\nu} u \right)^{\left( \mu   \right)} \\
& = \sum_{\mu \geq 0} \sum_{\nu \geq \mu} \frac{(-1)^{\nu}}{\nu!} \binom{\nu}{\mu} f^{\left(\nu-\mu  \right)}\left(a_{\nu} u \right)^{\left( \mu   \right)} \\
 & = \sum_{\mu \geq 0} \sum_{\tau \geq 0}  \frac{(-1)^{\mu+\tau}}{\left( \mu+\tau \right)!} \binom{\mu+\tau}{\mu} f^{\left( \tau  \right)} \left( a_{\mu+\tau} u   \right)^{\left( \mu  \right)} \\
 & = \sum_{\mu \geq 0} \frac{(-1)^{\mu}}{\mu!} \sum_{\tau \geq 0} \frac{(-1)^{\tau}}{\tau!} f^{\left( \tau  \right)}  \left( a_{\mu+\tau} u   \right)^{\left( \mu  \right)} \\
 & = \sum_{\tau \geq 0} \frac{(-1)^{\tau}}{\tau!} f^{\left( \tau  \right)} \sum_{\mu \geq 0}  \frac{(-1)^{\mu}}{\mu!}     \left( a_{\mu+\tau} u   \right)^{\left( \mu  \right)} = \sum_{\tau \geq 0} \frac{(-1)^{\tau}}{\tau!} f^{\left( \tau  \right)} J^{\left(\tau   \right)}(u),
\end{align*}
whence \eqref{J(fu)}.
Finally, being $\deg f = m$, we have
\begin{align*}
\left( K\left( J f  \right) \right) (x) & = \sum_{\mu = 0 }^{m} \frac{b_{\mu}(x)}{\mu!} \left(\sum_{\nu=0}^{m}  \frac{a_{\nu}(x)}{\nu!} f^{(\nu)}\right)^{(\mu)}\\
 & = \sum_{\mu = 0 }^{m} \frac{b_{\mu}(x)}{\mu!} \sum_{\nu=0}^{m}  \frac{1}{\nu!} \left( a_{\nu}f^{(\nu)}\right)^{(\mu)}\\
 & = \sum_{\mu = 0 }^{m} \frac{b_{\mu}(x)}{\mu!} \sum_{\nu=0}^{m} \frac{1}{\nu!} \sum_{\tau =0}^{\mu} \binom{\mu}{\tau} a_{\nu}^{(\mu-\tau)}(x)f^{(\nu+\tau)}(x)\\
 & = \sum_{\nu=0}^{m} \frac{1}{\nu!} \sum_{\mu = 0 }^{m} \frac{b_{\mu}(x)}{\mu!}\sum_{\tau =0}^{\mu} \binom{\mu}{\tau} a_{\nu}^{(\mu-\tau)}(x)f^{(\nu+\tau)}(x)
 \end{align*}
 \begin{align*}
\left( K\left( J f  \right) \right) (x)  & = \sum_{\nu=0}^{m} \frac{1}{\nu!} \sum_{\tau=0}^{m} f^{(\nu+\tau)}(x)\sum_{\mu=\tau}^{m} \binom{\mu}{\tau}\frac{b_{\mu}(x)}{\mu!}a_{\nu}^{(\mu-\tau)}(x)\\
 & = \sum_{n=0}^{m}\left( \sum_{\nu+\tau=n} \frac{1}{\nu!} \sum_{\mu=\tau}^{\nu+\tau} \binom{\mu}{\tau}\frac{b_{\mu}(x)}{\mu!}a_{\nu}^{(\mu-\tau)}(x) \right) f^{(n)}(x)\\
 &= \sum_{n= 0 }^{m} \frac{c_{n}(x)}{n!}f^{(n)}(x), \quad \textrm{where}
 \end{align*}
\begin{align*}
c_{n}(x)& = n! \sum_{\nu+\tau = n}\frac{1}{\nu!} \sum_{\mu=\tau}^{n} \binom{\mu}{\tau} \frac{b_{\mu}(x)}{\mu!} a_{\nu}^{(\mu-\tau)}(x)=\sum_{\nu=0}^{n} \binom{n}{\nu} \sum_{\mu=0}^{\nu} \frac{b_{n-\nu+\mu}(x)}{\mu!}a_{\nu}^{(\mu)}(x)\\
& = \sum_{\nu=0}^{n} \binom{n}{\nu} \sum_{\mu=0}^{\nu} b_{n-\mu}(x) \frac{a_{\nu}^{(\nu-\mu)}(x)}{\left(\nu-\mu \right)!} = \sum_{\mu=0}^{n} b_{n-\mu}(x) \sum_{\nu=\mu}^{n} \binom{n}{\nu}\frac{a_{\nu}^{(\nu-\mu)}(x)}{\left(\nu-\mu \right)!}\\
&=\sum_{\mu=0}^{n} b_{\mu}(x) \sum_{\nu=0}^{\mu}\binom{n}{\nu}\frac{a_{n-\nu}^{(\mu-\nu)}(x)}{\left(\mu-\nu \right)!}\; , \quad n \geq 0. \qedhere
 \end{align*}\end{proof}

\begin{corollary}
When $J$ is an isomorphism, the inverse $J^{-1}$ is given by $J^{-1} = \sum_{n\geq 0} \dfrac{\tilde{a}_{n}(x)}{n!}D^{n},$ where
\begin{align*}
\tilde{a}_{0}(x)=\left( \lambda_{0}^{[0]}\right)^{-1},\;\; \lambda_{n+1}^{[0]}\tilde{a}_{n+1}(x) = - \sum_{\mu=0}^{n} \tilde{a}_{\mu}(x) \sum_{\nu=0}^{\mu} \binom{n+1}{\nu} \frac{a_{n+1-\nu}^{(\mu-\nu)}(x)}{(\mu-\nu)!}\;, \; n \geq 0, \quad \end{align*}
with  $\lambda_{n}^{[0]} := \sum_{\mu=0}^{n}\binom{n}{\mu} a_{\mu}^{[\mu]}$ that are assumed non-zero in \eqref{lambdan}.\end{corollary}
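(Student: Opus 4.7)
The plan is to obtain $J^{-1}$ directly by writing down the ansatz $J^{-1}=\sum_{n\geq 0}\frac{\tilde a_{n}(x)}{n!}D^{n}$ and forcing $J^{-1}\circ J=I$ via the composition formula \eqref{composed-coeff}. The preliminary step is to justify this ansatz: since $J$ is an isomorphism, \eqref{iso-conditions} gives $\deg J(\xi^{n})=n$, so $J$ restricts to a bijection on each finite-dimensional subspace $\mathcal{P}_{n}:=\mathrm{span}\{\xi^{0},\dots,\xi^{n}\}$; hence $J^{-1}$ also preserves $\mathcal{P}_{n}$, i.e.\ $J^{-1}$ does not increase the degree, and Lemma \ref{lemmaPascal} produces the required (and unique) polynomial coefficients $\tilde a_{n}$ with $\deg\tilde a_{n}\le n$.

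Next I would apply \eqref{composed-coeff} with $K=J^{-1}$ (so $b_{\mu}=\tilde a_{\mu}$) and equate the resulting $c_{n}(x)$ to the $J$-expansion of the identity, namely $c_{0}(x)\equiv 1$ and $c_{n}(x)\equiv 0$ for $n\ge 1$. The case $n=0$ reduces to $\tilde a_{0}(x)\,a_{0}(x)=1$; because $\deg a_{0}\le 0$ forces $a_{0}(x)=a_{0}^{[0]}=\lambda_{0}^{[0]}$, this gives $\tilde a_{0}(x)=(\lambda_{0}^{[0]})^{-1}$ as stated. For $n\ge 1$, I would isolate the term $\mu=n$ in the second form of \eqref{composed-coeff}, obtaining
\begin{equation*}
\tilde a_{n}(x)\sum_{\nu=0}^{n}\binom{n}{\nu}\frac{a_{n-\nu}^{(n-\nu)}(x)}{(n-\nu)!}
+\sum_{\mu=0}^{n-1}\tilde a_{\mu}(x)\sum_{\nu=0}^{\mu}\binom{n}{\nu}\frac{a_{n-\nu}^{(\mu-\nu)}(x)}{(\mu-\nu)!}=0.
\end{equation*}

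The key algebraic observation is that the coefficient multiplying $\tilde a_{n}(x)$ is precisely the scalar $\lambda_{n}^{[0]}$. Indeed, reindexing by $\sigma=n-\nu$ turns this coefficient into $\sum_{\sigma=0}^{n}\binom{n}{\sigma}a_{\sigma}^{(\sigma)}(x)/\sigma!$, and the hypothesis $\deg a_{\sigma}\le\sigma$ makes $a_{\sigma}^{(\sigma)}(x)$ the constant $\sigma!\,a_{\sigma}^{[\sigma]}$, yielding $\sum_{\sigma=0}^{n}\binom{n}{\sigma}a_{\sigma}^{[\sigma]}=\lambda_{n}^{[0]}$. Since the hypothesis \eqref{lambdan} ensures $\lambda_{n}^{[0]}\neq 0$ for every $n$, the displayed equation solves uniquely for $\tilde a_{n}(x)$ in terms of $\tilde a_{0},\dots,\tilde a_{n-1}$; shifting $n\mapsto n+1$ reproduces the recurrence in the statement. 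The only subtlety I expect is making the index-switching in the $\mu=n$ term transparent enough to exhibit $\lambda_{n}^{[0]}$; once this combinatorial identification is in place, the rest is automatic, and one may either verify $J\circ J^{-1}=I$ symmetrically or invoke that a left inverse of a bijective endomorphism of $\mathcal{P}_{n}$ for every $n$ is automatically two-sided.
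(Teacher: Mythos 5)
Your proposal is correct and follows essentially the same route as the paper: impose $J^{-1}\circ J=I$ via the composition formula \eqref{composed-coeff} with $b_{\mu}=\tilde a_{\mu}$, isolate the top term $\mu=n$, and identify its coefficient as $\lambda_{n}^{[0]}$ through $a_{\sigma}^{(\sigma)}(x)=\sigma!\,a_{\sigma}^{[\sigma]}$. The only difference is that you make explicit two points the paper leaves tacit (that $J^{-1}$ does not increase the degree, hence admits a $J$-format expansion by Lemma \ref{lemmaPascal}, and that the left inverse is automatically two-sided), which is a welcome but inessential addition.
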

\begin{proof}
From \eqref{composed-coeff}, we have $1 = \tilde{a}_{0}(x)a_{0}(x)$ and for $n \geq 1$,
\newline $0 = \sum_{\mu=0}^{n}\tilde{a}_{\mu}(x)\sum_{\nu=0}^{\mu} \binom{n}{\nu} \frac{a_{n-\nu}^{(\mu-\nu)}(x)}{(\mu-\nu)!}$, or with $n \rightarrow n +1$
$$\tilde{a}_{n+1}(x)\sum_{\nu=0}^{n+1} \binom{n +1}{n+1-\nu} \frac{a_{\nu}^{(\nu)}(x)}{\nu!} =- \sum_{\mu=0}^{n}\tilde{a}_{\mu}(x)\sum_{\nu=0}^{\mu} \binom{n +1}{\nu} \frac{a_{n+1-\nu}^{(\mu-\nu)}(x)}{(\mu-\nu)!}\,,\; n \geq 0.$$
But $a_{\nu}^{(\nu)}(x)=\nu!a_{\nu}^{\nu}$, whence the result. 
\end{proof}

\section{Looking at $J$ as a lowering operator of order $k$}

The use of the generic operator $J$ permits to deal with problems of finding polynomial sequences either defined as (element-wise) eigenfunctions of a given operator, or fulfilling an Appel behavior towards such operator, that is, when $J$ implies the descending of the degree in precisely one, as the single derivative acts. 
The technical approach has been already tackled in previous contributions where the authors efforts were mainly focused on an Appel behavior (e.g.\cite{MesquitaPMH}). We shall now proceed with the introduction of a wider definition which assembles all the referred situations.

\medskip

Let us suppose that $J$ is an operator expressed as in \eqref{operatorJ}, and acting as the derivative of order $k$, for some non-negative integer $k$, that is, it fulfils the following conditions.
\begin{align}
\label{deg-k1} & J\left( \xi^{k}\right)(x) = a_{0}^{[k]} \neq 0 \;\; \textrm{and} \; \;\deg\left(J\left( \xi^{n+k}\right)(x)  \right) = n,\quad n \geq 0;  \\
\label{deg-k2}& J\left( \xi^{i}\right)(x) =0,\quad 0 \leq i \leq k-1,\; \textrm{if} \; k\geq 1.
\end{align}

\begin{lemma}\label{J-descending}
An operator $J$ fulfils \eqref{deg-k1}-\eqref{deg-k2} if and only if the next set of conditions hold.
\begin{description}
\item[a)] $a_{0}(x)=\cdots=a_{k-1}(x)=0$, if $k \geq 1$;
\item[b)] $\deg\left( a_{\nu}(x) \right) \leq \nu-k$, $\nu \geq k$;
\item[c)] \begin{equation}\label{neqcondition}
\lambda_{n+k}^{[k]} := \sum_{\nu = 0}^{n} \binom{n+k}{n+k-\nu} a_{n-\nu}^{[n+k-\nu]} \neq 0, \; n \geq 0.
\end{equation}
\end{description}
\end{lemma}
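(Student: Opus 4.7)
The plan is to split the equivalence into its two directions and to rest both on the explicit expansion \eqref{Jx^n-short} together with the normalization $\deg a_\nu \leq \nu$ built into \eqref{operatorJ}.

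For the easy direction $(\Leftarrow)$, I would assume (a), (b), (c) and read \eqref{deg-k1}-\eqref{deg-k2} directly from \eqref{Jx^n-short}. When $0\leq i\leq k-1$, condition (a) kills every summand of $J(\xi^i)(x)$. When $i=n+k$ with $n\geq 0$, condition (a) again reduces the sum to
\[
J(\xi^{n+k})(x)=\sum_{j=0}^{n}\binom{n+k}{k+j}\,a_{k+j}(x)\,x^{n-j},
\]
and condition (b) forces each summand to have degree at most $j+(n-j)=n$. The coefficient of $x^n$ in $J(\xi^{n+k})$ then equals $\sum_{j=0}^{n}\binom{n+k}{k+j}a_{j}^{[k+j]}$, which after the reindexing $\nu=n-j$ is precisely $\lambda_{n+k}^{[k]}$; this is nonzero by (c), giving $\deg J(\xi^{n+k})=n$ exactly, as required.

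For the converse direction $(\Rightarrow)$, I would first obtain (a) by a short finite induction on $i\in\{0,\ldots,k-1\}$: once $a_0=\cdots=a_{i-1}=0$, the expansion \eqref{Jx^n-short} collapses to $J(\xi^{i})(x)=a_{i}(x)$, which is forced to be zero by \eqref{deg-k2}. Then (b) and (c) fall out together by induction on $n\geq 0$. The base $n=0$ is immediate, since (a) gives $J(\xi^{k})(x)=a_{k}(x)$, and \eqref{deg-k1} then says this polynomial is the nonzero constant $a_0^{[k]}$, so that $\deg a_k\leq 0$ and $\lambda_k^{[k]}=a_0^{[k]}\neq 0$.

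The inductive step is where I expect the main obstacle, and it is the reason that (b) and (c) must be carried together. Assuming (b) holds for all indices up to $k+n-1$, the display above still applies: the terms with $0\leq j\leq n-1$ have degree at most $n$ by the inductive hypothesis, while the lone term with $j=n$ contributes $a_{n+k}(x)$, whose degree is a priori only bounded by $n+k$. The crux is the observation that no cancellation between $a_{n+k}(x)$ and the lower-indexed summands can occur above degree $n$, because those summands already live in degree $\leq n$; hence the hypothesis $\deg J(\xi^{n+k})=n$ forces $\deg a_{n+k}\leq n$, which establishes (b) at the new index. Reading off the coefficient of $x^n$ as in the $(\Leftarrow)$ direction then identifies the leading coefficient of $J(\xi^{n+k})$ with $\lambda_{n+k}^{[k]}$, and its non-vanishing is exactly equivalent to $\deg J(\xi^{n+k})=n$, closing the induction and delivering (c).
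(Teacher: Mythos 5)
Your proof is correct and follows essentially the same route as the paper: establish a) by induction from \eqref{Jx^n-short}, identify the coefficient of $x^{n}$ in $J(\xi^{n+k})$ with $\lambda_{n+k}^{[k]}$, and run a joint induction on $n$ to obtain b) and c). The only cosmetic difference is that your inductive step argues at the level of degrees (no cancellation above degree $n$ is possible) where the paper extracts the explicit coefficient recurrence \eqref{induction-eq}; the two formulations encode the same observation.
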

\begin{proof}
Firstly, let us prove that a) and \eqref{deg-k2} are equivalent. Recalling identity \eqref{Jx^n-short}, 
it is clear that $J\left( \xi^0 \right) (x) =0$ implies $a_{0}(x)=0$, and since
$$J\left( \xi^i \right) (x) = 0 \Leftrightarrow a_{i}(x) = - \sum_{\nu= 0}^{i-1} a_{\nu} (x) 
\binom{i}{\nu} x^{i-\nu},$$
we can prove, by induction on $i$, that \eqref{deg-k2} $\Rightarrow$ a), being  the opposite sense trivial. Let us remark again that by \eqref{Jx^n-short}, we have: 
$$J\left( \xi^{n+k}\right)(x)= \sum_{\nu= 0}^{n+k} a_{\nu} (x) 
\binom{n+k}{\nu} x^{n+k-\nu} = \sum_{\tau= 0}^{n+k}  \left( \sum_{\nu= 0}^{\tau}  \binom{n+k}{n+k-\nu} a_{\tau-\nu}^{[n+k-\nu]}    \right) x^{\tau}$$ where the coefficient of $x^n$ is given by the above defined constant $\lambda_{n+k}^{[k]}$.
\newline If we suppose b) then $\deg\left(J\left( \xi^{n+k}\right)(x)  \right) \leq n$, and in addition c) implies $\deg\left(J\left( \xi^{n+k}\right)(x)  \right) = n$.
Conversely, if we suppose \eqref{deg-k1}, then $\lambda_{n+k}^{[k]} \neq 0$ and 
$$\sum_{\nu= 0}^{n+s}  
\binom{n+k}{n+k-\nu} a_{n+s-\nu}^{[n+k-\nu]}=0,$$
for $s=1,\ldots, k$ and $n \geq 0$.
Rewriting this last identity, we get:
\begin{equation}\label{induction-eq}
a_{n+s}^{[n+k]} =-\sum_{\nu= 1}^{n+s}  
\binom{n+k}{n+k-\nu} a_{n+s-\nu}^{[n+k-\nu]}.
\end{equation}
Considering \eqref{induction-eq} with $n=0$ and applying item a), we obtain $a_{s}^{[k]}=0,\; s=1,\ldots,k,$ which implies $\deg \left( a_{k}(x) \right) \leq 0$, or item b) for $\nu=k$.
The induction hypotheses $\deg \left( a_{i}(x) \right) \leq i-k$, with $i=k,\ldots, k+n-1$, for  a certain $n \geq 1$, correspond to assume that $a_{j}^{[i]}=0,\;\; 0 \leq i-j \leq k-1, \quad i= k,\ldots, k+n-1$.
Let us now note that $0\leq n+k-\nu-(n+s-\nu) \leq k-1$ and insert the hypotheses on \eqref{induction-eq} in order to conclude $a_{n+s}^{[n+k]} =0,\; s=1,\ldots, k$, that is, $\deg(a_{n+k}) \leq n$, which completes the proof.
\end{proof}

\begin{remark}
Note that in \eqref{neqcondition} we find $\lambda_{k}^{[k]}=a_{0}^{[k]} $.
\newline If $k=0$, then it is assumed that $\lambda_{n}^{[0]} \neq 0,\; n \geq 0$, matching \eqref{lambdan}, so that $J$ is an isomorphism. 
\newline If $k=1$, then $J$ imitates the usual derivative and is commonly called a lowering operator \cite{QuadraticAppell, Srivastava-Ben cheik, MesquitaPMH}.
\end{remark}

\begin{definition}
An operator $J$ fulfilling \eqref{deg-k1}-\eqref{deg-k2}, or equivalently items a)-c) of Lemma \ref{J-descending}, for some positive integer $k$, is herein called lowering operator of order $k$.
\end{definition}

Given a MPS $\{ P_{n} \}_{n \geq 0}$ and a non-negative integer $k$, let us define its (normalized) $J$-image sequence of polynomials as follows, and notate its dual sequence by $\{ \tilde{u}_{n}\}_{n \geq 0}$. Of course, given that $J$ satisfies \eqref{deg-k1}-\eqref{deg-k2}, the obtained polynomial sequence $\{ \tilde{P}_{n} \}_{n \geq 0}$ is also a MPS.  
\begin{equation}\label{JPn}
\tilde{P}_{n}(x)= \left( \lambda_{n+k}^{[k]} \right)^{-1}J\left( P_{n+k} (x) \right), \quad n \geq 0.
\end{equation}
We keep using $J$ to indicate also the transpose operator  $\;^{t}J$, so that the dual sequences of $\{ P_{n} \}_{n \geq 0}$ and $\{ \tilde{P}_{n} \}_{n \geq 0}$ are related as the next Lemma points out.
\begin{lemma} \label{J(tilde un)} Let us consider a MPS $\{ P_{n} \}_{n \geq 0}$ and an operator $J$ of the form \eqref{operatorJ} fulfilling \eqref{deg-k1}-\eqref{deg-k2}. Thus,
\par \begin{center} $\quad J\left( \tilde{u}_{n} \right) = \lambda_{n+k}^{[k]} u_{n+k}$. \end{center}
\end{lemma}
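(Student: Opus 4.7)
The natural plan is to establish the functional identity by testing both sides against the basis $\{P_{m}\}_{m \geq 0}$ of $\mathcal{P}$, since a form is determined by its action on any polynomial basis. Starting from the definition of the transpose, I have $\langle J(\tilde{u}_{n}), P_{m} \rangle = \langle \tilde{u}_{n}, J(P_{m}) \rangle$, so the task reduces to evaluating $J(P_{m})$ and pairing the result with $\tilde{u}_{n}$.

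I would split according to whether $m < k$ or $m \geq k$. In the first case, $P_{m}$ is a linear combination of $1, x, \dots, x^{k-1}$, and by condition \eqref{deg-k2} (equivalently, item a) of Lemma \ref{J-descending}) each of these monomials is annihilated by $J$, so $J(P_{m}) = 0$ and $\langle J(\tilde{u}_{n}), P_{m}\rangle = 0$. In the second case, writing $m = m' + k$ with $m' \geq 0$, the defining relation \eqref{JPn} gives $J(P_{m}) = \lambda_{m}^{[k]} \tilde{P}_{m-k}$, so
\begin{equation*}
\langle J(\tilde{u}_{n}), P_{m} \rangle = \lambda_{m}^{[k]} \langle \tilde{u}_{n}, \tilde{P}_{m-k} \rangle = \lambda_{m}^{[k]}\, \delta_{n,\, m-k},
\end{equation*}
by the duality relation $\langle \tilde{u}_{n}, \tilde{P}_{\ell}\rangle = \delta_{n,\ell}$.

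Merging the two cases, $\langle J(\tilde{u}_{n}), P_{m}\rangle = \lambda_{n+k}^{[k]}\, \delta_{n+k,\, m}$ for every $m \geq 0$, which is exactly $\langle \lambda_{n+k}^{[k]} u_{n+k}, P_{m}\rangle$ by definition of the dual sequence $\{u_{m}\}_{m \geq 0}$. Since two forms agreeing on every $P_{m}$ must coincide (a consequence of $\{P_{m}\}$ being a basis, as noted after the definition of a PS), the identity $J(\tilde{u}_{n}) = \lambda_{n+k}^{[k]} u_{n+k}$ follows.

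There is no real obstacle here; the only point that deserves care is the clean use of \eqref{JPn} to rewrite $J(P_{m})$ as a monic-polynomial multiple of $\tilde{P}_{m-k}$ rather than as an expansion in the basis $\{\tilde{P}_{\ell}\}$, which is what makes the pairing with $\tilde{u}_{n}$ collapse to a single Kronecker symbol. The argument is essentially a transposition of \eqref{JPn} from $\mathcal{P}$ to $\mathcal{P}'$.
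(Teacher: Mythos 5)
Your proof is correct. It reaches the same key computation as the paper — transposing to get $\langle J(\tilde{u}_{n}), P_{m}\rangle = \langle \tilde{u}_{n}, J(P_{m})\rangle$ and then evaluating these pairings by splitting on $m<k$ versus $m\geq k$ — but the logical scaffolding differs. The paper first invokes Lemma \ref{lema1}: from $\langle J(\tilde{u}_{n}), P_{m+k}\rangle = 0$ for $m\geq n+1$ and $\langle J(\tilde{u}_{n}), P_{n+k}\rangle = \lambda_{n+k}^{[k]}\neq 0$ it deduces the finite expansion $J(\tilde{u}_{n}) = \sum_{\nu=0}^{n+k}\alpha_{n,\nu}u_{\nu}$ in the dual basis, and only then kills the coefficients $\alpha_{n,\nu}$ for $\nu<n+k$ (treating $0\leq\nu\leq k-1$ and $k\leq\nu\leq n+k-1$ separately, exactly your two cases). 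You bypass Lemma \ref{lema1} entirely and instead verify that $J(\tilde{u}_{n})$ and $\lambda_{n+k}^{[k]}u_{n+k}$ agree on every $P_{m}$, concluding by the remark, stated right after the definition of a PS, that a form vanishing on all $P_{m}$ is zero. Your route is slightly more elementary and uniform, since \eqref{JPn} collapses all cases $m\geq k$ into a single Kronecker symbol; the paper's route has the advantage of exhibiting the intermediate structural fact that $J(\tilde{u}_{n})$ lies in the span of $u_{0},\ldots,u_{n+k}$, which is the standard pattern used throughout this duality framework. Both arguments use exactly the same hypotheses, so the difference is one of presentation rather than substance.
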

\begin{proof}
We begin by applying the definition of dual sequence, together with prior definitions:
\begin{align*}
&\langle  \tilde{u}_{n} , \tilde{P}_{m}    \rangle = \delta_{n,m},\quad n, m \geq 0,\\
\Leftrightarrow & \langle  \tilde{u}_{n} , J\left( P_{m+k} \right)    \rangle =  \lambda_{m+k}^{[k]} \delta_{n,m},\quad n, m \geq 0,\\
\Leftrightarrow  & \langle J\left( \tilde{u}_{n} \right) , P_{m+k}    \rangle =  \lambda_{m+k}^{[k]} \delta_{n,m},\quad n, m \geq 0.
\end{align*}
Hence, $\langle J\left( \tilde{u}_{n} \right) , P_{m+k}  \rangle = 0,\; m \geq n+1$ and $\langle J\left( \tilde{u}_{n} \right) , P_{n+k}  \rangle = \lambda_{n+k}^{[k]} \neq 0$, by assumption on $J$.
Then, Lemma \ref{lema1} implies $J\left( \tilde{u}_{n} \right) = \displaystyle \sum_{\nu=0}^{n+k} \alpha_{n,\nu}u_{\nu}$, where $\alpha_{n,\nu} = \langle J\left( \tilde{u}_{n} \right), P_{\nu} \rangle = \langle  \tilde{u}_{n} , J \left( P_{\nu} \right) \rangle $, $0 \leq \nu \leq n+k$.
If $n=0$, we easily get $ J\left( \tilde{u}_{0} \right) = \alpha_{0,k} u_{k}$. Supposing $n \geq 1$, 
on one hand condition \eqref{deg-k2} yields  $\alpha_{n,\nu}=0$, if $0\leq \nu \leq k-1$, and on the other hand $\alpha_{n,\nu} =\langle  \tilde{u}_{n} ,  \lambda_{\nu}^{[k]} \tilde{P}_{\nu-k} \rangle =  \lambda_{\nu}^{[k]} \delta_{n, \nu-k}=0$, if $k \leq \nu \leq n+k-1$.
In conclusion, $J\left( \tilde{u}_{n} \right) = \alpha_{n,n+k}u_{n+k}$, with $\alpha_{n,n+k}= \lambda_{n+k}^{[k]}$, $ n \geq 0$.
\end{proof}

\begin{lemma} \label{tildeP=P} Let us consider a MPS $\{ P_{n} \}_{n \geq 0}$ and an operator $J$ of the form \eqref{operatorJ} such that \eqref{deg-k1}-\eqref{deg-k2} hold. Thus,
\begin{center} $\tilde{P}_{n}(x) = P_{n}(x),\;\; n \geq 0,\quad$ if and only if $\quad J\left( u_{n} \right) = \lambda_{n+k}^{[k]} u_{n+k}$. \end{center}
\end{lemma}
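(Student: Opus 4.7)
The plan is to pivot on the preceding Lemma~\ref{J(tilde un)}, which already supplies the identity $J(\tilde{u}_n) = \lambda_{n+k}^{[k]} u_{n+k}$ for every $n \geq 0$, independently of any relation between $\tilde{P}_n$ and $P_n$. The forward implication of the biconditional then comes essentially for free: assuming $\tilde{P}_n = P_n$ for all $n$, the uniqueness of the dual sequence attached to a MPS forces $\tilde{u}_n = u_n$, and substitution into Lemma~\ref{J(tilde un)} yields $J(u_n) = \lambda_{n+k}^{[k]} u_{n+k}$ at once.

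For the converse I would work by transposition. Starting from the hypothesis $J(u_n) = \lambda_{n+k}^{[k]} u_{n+k}$, I would pair both sides against an arbitrary $P_m$ to obtain $\langle u_n, J(P_m) \rangle = \lambda_{n+k}^{[k]} \delta_{n+k, m}$. Because $J$ is a lowering operator of order $k$, condition \eqref{deg-k2} gives $J(P_m) = 0$ for $m < k$ (in which case both sides of the identity vanish automatically), while the definition \eqref{JPn} rewrites $J(P_m) = \lambda_m^{[k]} \tilde{P}_{m-k}$ for $m \geq k$. Setting $r = m - k$ and dividing by the non-zero factor $\lambda_{r+k}^{[k]}$ guaranteed by \eqref{neqcondition}, I would arrive at $\langle u_n, \tilde{P}_r \rangle = \delta_{n,r}$ for all $n, r \geq 0$.

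This shows that $\{u_n\}_{n \geq 0}$ serves simultaneously as a dual sequence for both $\{P_n\}_{n \geq 0}$ and $\{\tilde{P}_n\}_{n \geq 0}$; equivalently, $\langle u_n, P_m - \tilde{P}_m \rangle = 0$ for every $n, m$. Invoking the standard remark recorded in Section~1, to the effect that any polynomial annihilated by every $u_n$ must be zero, I would conclude $P_m = \tilde{P}_m$ for all $m$, which closes the equivalence.

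I do not anticipate a significant obstacle: the whole argument is essentially an exercise in duality, and the only point requiring a little care is the separate treatment of the range $m < k$, where it is the lowering condition \eqref{deg-k2}, rather than the definition of $\tilde{P}_n$, that makes $J(P_m)$ vanish. Everything else is routine bookkeeping with the non-zero constants $\lambda_{n+k}^{[k]}$.
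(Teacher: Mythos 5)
Your proof is correct. The forward implication is exactly the paper's: uniqueness of the dual sequence gives $\tilde{u}_{n}=u_{n}$, and Lemma~\ref{J(tilde un)} does the rest. For the converse, however, you take a genuinely different (and equally valid) route. The paper stays on the side of forms: from the hypothesis and Lemma~\ref{J(tilde un)} it gets $J\left(\tilde{u}_{n}-u_{n}\right)=0$, then tests this against the polynomial sequence $B_{m}(x)=J\left(\xi^{m+k}\right)(x)$ (which has $\deg B_{m}=m$ by \eqref{deg-k1}) to conclude $\tilde{u}_{n}=u_{n}$, and hence $\tilde{P}_{n}=P_{n}$. You instead work on the side of polynomials: pairing $J\left(u_{n}\right)=\lambda_{n+k}^{[k]}u_{n+k}$ against $P_{m}$ and unwinding \eqref{JPn} gives $\langle u_{n},\tilde{P}_{r}\rangle=\delta_{n,r}$, so $\{u_{n}\}_{n\geq 0}$ is also the dual sequence of $\{\tilde{P}_{n}\}_{n\geq 0}$ and the remark of Section~1 (a polynomial annihilated by every $u_{n}$ is zero) forces $P_{m}=\tilde{P}_{m}$. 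The two arguments are transposes of one another and of comparable length; yours has the small advantage of not invoking Lemma~\ref{J(tilde un)} at all in the converse and of making fully explicit the final step from equality of dual sequences to equality of the MPSs, which the paper leaves implicit, while the paper's version isolates the injectivity-type property of $J$ on $\mathcal{P}^{\prime}$ that it reuses elsewhere. Your handling of the range $m<k$ (where \eqref{deg-k2} makes both sides vanish) is a correct consistency check, though, as you note, it is not needed for the conclusion.
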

\begin{proof}
If we suppose that $\tilde{P}_{n}(x) = P_{n}(x),\;n \geq 0,$ then naturally, by means of Lemma \ref{J(tilde un)}, we get $J\left( u_{n} \right) = \lambda_{n+k}^{[k]} u_{n+k}$. Conversely, if $J\left( u_{n} \right) = \lambda_{n+k}^{[k]} u_{n+k}$ holds, then Lemma  \ref{J(tilde un)} yields $J\left( \tilde{u}_{n} \right)= J\left( u_{n} \right)$ which implies $\tilde{u}_{n}= u_{n},\; n \geq 0$, taking into account the attributes of $J$ and that for each $n$, we have $\langle \tilde{u}_{n} - u_{n}, B_{m}(x)  \rangle=0$, for all the elements of the PS $\{B_{m}\}_{m \geq 0}$, defined by $B_{m}(x)=J\left( \xi^{m+k} \right)(x)$.  
\end{proof}

In brief, given a non-negative integer $k$, when we consider the general problem of finding the MPSs $\textbf{P} = \left( P_{0}, P_{1}, \ldots \right)^{t}$ such that 
\begin{equation}
 \label{general-matrix-identity}\textbf{P} = \Lambda^{[k]}_{J} \; J \left( \textbf{P} \right),
 \end{equation}
where the matrix $\Lambda^{[k]}_{J}$ (with infinite dimensions) has the first $k$ columns filled with zeros and is defined as next,
$$ \Lambda^{[k]}_{J} =\left[\begin{array}{cccccccc}
\overbrace{ 0          \cdots  0  }^{ k\; \rm columns}      & \left(\lambda_{k}^{[k]}\right)^{-1}& 0 & \cdots &  &  & &\cdots \\
 \vdots \quad\;\;\,    \vdots & 0 &  \left(\lambda_{1+k}^{[k]}\right)^{-1} & 0 & \cdots &  & &\cdots \\
    0        \quad\;\;           0           & 0 & 0 &\ddots & 0 &  & &\cdots \\
                                     & 0 & 0 & 0 &  \left(\lambda_{n+k}^{[k]}\right)^{-1}& 0 & &\cdots \\
 \vdots  \quad\;\;\,              \vdots & 0 & 0 & 0 & 0 & \ddots &&  \ddots  \\
 \end{array}\right]  $$
a common technique can be pursuit by taking into consideration that  \eqref{general-matrix-identity} is equivalent to a shift of order $k$ in the dual sequence $\textbf{u}= \left( u_{0}, u_{1}, \ldots \right)^{t}$ as established in Lemma \ref{tildeP=P}, or through the similar identity:
\begin{equation}
J\left( \textbf{u} \right) = \left[\begin{array}{cccccccc}
\overbrace{ 0          \cdots  0  }^{ k\; \rm columns}      & \lambda_{k}^{[k]}& 0 & \cdots &  &  & &\cdots \\
 \vdots \quad\;\;\,    \vdots & 0 &  \lambda_{1+k}^{[k]} & 0 & \cdots &  & &\cdots \\
    0        \quad\;\;           0           & 0 & 0 &\ddots & 0 &  & &\cdots \\
                                     & 0 & 0 & 0 &  \lambda_{n+k}^{[k]}& 0 & &\cdots \\
 \vdots  \quad\;\;\,              \vdots & 0 & 0 & 0 & 0 & \ddots &&  \ddots  \\
 \end{array}\right]  \; \textbf{u}.
\end{equation}

\section{Differential relations: general Appell behavior and a review on classical orthogonal polynomials}\label{sec:orto} 
To illustrate the application of the theoretical exposition of the previous sections in finding the MPSs $\{P_{n} \}_{n \geq 0}$ fulfilling $\tilde{P}_{n}(x) = P_{n}(x),\;n \geq 0,$ with  $\tilde{P}_{n}(x)$ defined by \eqref{JPn}, we will now 
consider an operator $J$ of second order with three terms ($a_{2}(x)$ non-trivial), as follows
\begin{equation}
J = a_{0}(x)I + a_{1}(x)D+\frac{a_{2}(x)}{2}D^{2},
\end{equation}
and moreover we will suppose that the given MPS $\{P_{n} \}_{n \geq 0}$ is orthogonal, which can be translated in terms of the elements of the dual sequence through the next recurrence of Theorem \ref{regular}:
\begin{equation}\label{xun}
xu_{n}=u_{n-1}+\beta_{n}u_{n}+\gamma_{n+1}u_{n+1}, \quad \gamma_{n+1} \neq
  0,\;\;\; n \geq 0,\quad  u_{-1}=0.
\end{equation}
A further important technical aspect is given by identity \eqref{J(fu)} which asserts in this case that
\begin{equation}\label{J(xu)}
J(xu)= xJ(u)-a_{1}(x)u +D\left(a_{2}(x) u\right),
\end{equation}
where, in particular, 
\begin{equation*}
J(u)= a_{0}(x)u - D\left(a_{1}(x)u \right)+\frac{1}{2}D^{2}\left(a_{2}(x)u\right)\; \textrm{and}\; J^{(1)}(u)=a_{1}(x)u-D\left(a_{2}(x)u \right),
\end{equation*}
as identities \eqref{transpose-J} and \eqref{J^(m)} establish.

\subsection{$J$ is an isomorphism}
If we assume that $J$ is an isomorphism, we may look at Lemma \ref{J-descending} with $k=0$ and hence we are requiring that $\deg \left( a_{\nu}(x) \right) \leq \nu,\; \nu=0,1,2$ and
\begin{equation}
\lambda_{n}^{[0]}= a_{0}^{[0]}+n a_{1}^{[1]}+ \frac{n(n-1)}{2}a_{2}^{[2]} \neq 0,\quad n \geq 0.
\end{equation}
The identity $\tilde{P}_{n}(x) = P_{n}(x),\;n \geq 0,$ can be read as the following differential relation, which put us on the path of classical solutions (cf. \cite{Bochner, euler}).
\begin{equation}
a_{2}(x) P^{\prime\prime}_{n}(x)+2a_{1}(x)P^{\prime}_{n}(x) + 2 \left( a_{0}(x)-\lambda_{n}^{[0]}  \right)P_{n}(x)=0,\;  n \geq 0.
\end{equation}
If we apply $J$ (transpose) to identity \eqref{xun} and introduce \eqref{J(xu)} and Lemma \ref{tildeP=P}, we obtain the following relation where $\lambda_{-1}^{[0]}=0$.
\begin{align}
\label{eq-orto-k0-n}&\left(\lambda_{n}^{[0]}\left( x-\beta_{n}\right) -a_{1}(x)\right)u_{n} = \lambda_{n-1}^{[0]}u_{n-1}+\gamma_{n+1}\lambda_{n+1}^{[0]}u_{n+1} - D\left(a_{2}(x) u_{n} \right).
\end{align}
Taking \eqref{eq-orto-k0-n} with $n=0$, we get:
\begin{align*}
&\left(\lambda_{0}^{[0]}\left( x-\beta_{0}\right) -a_{1}(x)\right)u_{0} = \gamma_{1}\lambda_{1}^{[0]}u_{1} - D\left(a_{2}(x) u_{0} \right).
\end{align*}
The orthogonality hypothesis allows us to replace $u_1$ by $\left( \gamma_{1}\right)^{-1} \left( x-\beta_{0}\right)u_{0}$ (cf. item e) of Theorem \ref{regular}), yielding
\begin{align*}
&D\left(a_{2}(x) u_{0} \right)  + \left( \left(\lambda_{0}^{[0]}-\lambda_{1}^{[0]} \right)\left( x-\beta_{0}\right) -a_{1}(x)\right)u_{0} = 0,
\end{align*}
and since $\lambda_{0}^{[0]}= a_{0}^{[0]}$ and $\lambda_{1}^{[0]}= a_{0}^{[0]}+a_{1}^{[1]}$, we actually have
\begin{align}\label{functional-identity-k=0}
&D\left(a_{2}(x) u_{0} \right)  + \left(-2a_{1}^{[1]}x+a_{1}^{[1]}\beta_{0}-a_{0}^{[1]} \right)u_{0} = 0.
\end{align}
In addition, the identity $J\left(P_{n}\right)(x) = \lambda_{n}^{[0]}P_{n}(x)$ provides with $n=1$ the following information.
\begin{equation}\label{beta0-k0}
\beta_{0}=-\frac{a_{0}^{[1]}}{a_{1}^{[1]}}.
\end{equation}
Before further analysis, let us argue that $a_{1}^{[1]} \neq 0$, by supposing the opposite which implies that  $D\left(a_{2}(x) u_{0} \right)  = a_{0}^{[1]} u_{0}.$
\newline If $a_{0}^{[1]} = 0$, then $D\left(a_{2}(x) u_{0} \right) =0$ which will imply, by Lemma  \ref{phi-u=0}, $a_{2}(x) =0$. If $a_{0}^{[1]} \neq 0$, then 
\begin{align*}
&-\langle  a_{2}(x) u_{0} , P^{\prime}_{n} \rangle = \langle  D\left(a_{2}(x) u_{0} \right) , P_{n} \rangle = \langle  a_{0}^{[1]} u_{0} , P_{n} \rangle = a_{0}^{[1]}\delta_{0,n},\quad n \geq 0.
\end{align*}
In particular, when $n=0$ we obtain $\langle  a_{2}(x) u_{0} , 0 \rangle = -a_{0}^{[1]} \neq 0$ which is contradictory.

As a consequence, the functional identity \eqref{functional-identity-k=0} corresponds to a classical family of polynomial sequences as recalled before in \eqref{classical-equation}, with initial (non-normalized) polynomials:
\begin{align}
\label{phi-k0} &\phi(x) = a_{2}(x)=a_{2}^{[2]}x^2+a_{1}^{[2]}x + a_{0}^{[2]};\\
\label{psi-k0} &\psi(x) = -2a_{1}(x)=-2a_{1}^{[1]}x-2a_{0}^{[1]}, \quad a_{1}^{[1]} \neq 0,
\end{align}
though we must add the admissible condition $\psi^{\prime}-\frac{1}{2}\phi^{\prime\prime}n \neq 0,\: n \geq 1$, or more precisely
\begin{equation}\label{bessel-admissible}
2a_{1}^{[1]} + a_{2}^{[2]} n \neq 0,\, n \geq 0,
\end{equation}
so that \eqref{functional-identity-k=0} assures the recursive computation of the moments of $u_{0}$ \cite{theoriealgebrique}.
Depending on the degree of $a_2(x)$ and upon different affine transformations we shall be guided though the four classes of classical polynomial sequences, as expected (cf. \cite{variations}). 
\begin{itemize}
\item[A)] First case: $\deg\left(a_{2}(x) \right) = 2$.
In order to have a normalized $\phi(x)$, let us replace \eqref{phi-k0}-\eqref{psi-k0} by
\begin{align}
\label{phi-k0-n} &\phi_{1}(x) =x^2+\frac{a_{1}^{[2]}}{a_{2}^{[2]}}x + \frac{a_{0}^{[2]}}{a_{2}^{[2]}};\\
\label{psi-k0-n} &\psi_{1}(x) = -2\frac{a_{1}^{[1]}}{a_{2}^{[2]}}x+\frac{1}{a_{2}^{[2]}}\left( a_{1}^{[1]}\beta_{0}-a_{0}^{[1]}\right), \quad a_{1}^{[1]} \neq 0.
\end{align}
Let us rearrange $\phi_{1}(x)$ as $\phi_{1}(x)=(x-d)^2-\mu$, where 
$$\mu=-\frac{a_{0}^{[2]}}{a_{2}^{[2]}}+ \left( \frac{a_{1}^{[2]}}{2a_{2}^{[2]}} \right)^2,\quad d= - \frac{a_{1}^{[2]}}{2a_{2}^{[2]}}.$$ 
$\bullet$ Let us suppose that $\mu=0$ and thus $\phi_{1}(x)=(x-d)^2$.
Applying the affine transformation $x \mapsto x+d$ and following \eqref{phi-psi-affine}, we conclude that  the form $\tilde{u}_{0}= \tau_{-d} u_{0}$ fulfils $D\left( \tilde{\phi_{1}} \tilde{u}_{0} \right)  + \tilde{\psi_{1}} \tilde{u}_{0}= 0$, with
$$\tilde{\phi_{1}}(x)=x^2,\quad \tilde{\psi_{1}}(x)= -2\frac{a_{1}^{[1]}}{a_{2}^{[2]}}\left( x+d \right) + \frac{a_{1}^{[1]}}{a_{2}^{[2]}}\beta_{0}- \frac{a_{0}^{[1]}}{a_{2}^{[2]}}.$$
Defining $\alpha= \dfrac{a_{1}^{[1]}}{a_{2}^{[2]}}$ we get
$ \tilde{\psi_{1}}(x)= -2\alpha \left( x+d \right) + \alpha \beta_{0} +\dfrac{a_{1}^{[1]}}{a_{2}^{[2]}}\left( -\dfrac{a_{0}^{[1]}}{a_{1}^{[1]}} \right)$
and in view of \eqref{beta0-k0}, we can write $ \tilde{\psi_{1}}(x)= -2\alpha \left( x+d -\beta_{0} \right)$.
The recurrence coefficients corresponding to the regular form $\tilde{u}_{0}$ can be easily obtained by  \eqref{shifted-coef}, in particular, we have $\tilde{\beta}_{0}=\beta_{0}-d$, allowing us to notice that $\tilde{\phi_{1}}(x)=x^2$ and $ \tilde{\psi_{1}}(x)= -2\alpha \left( x-\tilde{\beta}_{0} \right),$ or in other words, $\tilde{u}_{0}$ is a Bessel form with parameter $\alpha= \dfrac{a_{1}^{[1]}}{a_{2}^{[2]}}$.

\noindent $\bullet$ Let us suppose that $\mu \neq 0$ and thus $\phi_{1}(x)=(x-d)^2-\mu$.
Applying the affine transformation $x \mapsto \sqrt{\mu}\,x+d$ and following \eqref{phi-psi-affine}, we conclude that  the form $\tilde{u}_{0} =\left( h_{\left(\sqrt{\mu}\right)^{-1}}\circ \tau_{-d} \right) u_{0}$ fulfils $D\left( \tilde{\phi_{1}} \tilde{u}_{0} \right)  + \tilde{\psi_{1}} \tilde{u}_{0}= 0$, with
$$\tilde{\phi_{1}}(x)=x^2-1,\quad \tilde{\psi_{1}}(x)= -\frac{2a_{1}^{[1]}}{a_{2}^{[2]}} \sqrt{\mu}\,x- \frac{2a_{1}^{[1]}}{a_{2}^{[2]}}d+ \frac{2a_{1}^{[1]}}{a_{2}^{[2]}}\beta_{0}.$$
Rewriting this last polynomial, we see a Jacobi form, because
$\tilde{\phi_{1}}(x)=x^2-1$ and $\tilde{\psi_{1}}(x)= -\left( \alpha + \beta+2 \right)x+\alpha -\beta$
with
\newline $\; \alpha = \dfrac{a_{1}^{[1]}}{a_{2}^{[2]}}\left(  \sqrt{\mu}-d-\dfrac{a_{0}^{[1]}}{a_{1}^{[1]}} \right)-1$ and $\beta = \dfrac{a_{1}^{[1]}}{a_{2}^{[2]}}\left(  \sqrt{\mu}+d+\dfrac{a_{0}^{[1]}}{a_{1}^{[1]}} \right)-1$.

\item[B)] Second case: $\deg\left(a_{2}(x) \right) = 1$. 
In order to have a monic $\phi(x)$, let us replace \eqref{phi-k0}-\eqref{psi-k0} by
\begin{align}
\label{phi-k0-case2} &\phi_{1}(x) =x + \frac{a_{0}^{[2]}}{a_{1}^{[2]}};\\
\label{psi-k0-case2} &\psi_{1}(x) = -\frac{2a_{1}^{[1]}}{a_{1}^{[2]}}x-\frac{2a_{0}^{[1]}}{a_{1}^{[2]}}, \quad a_{1}^{[2]} \neq 0.
\end{align}
In particular, condition \eqref{bessel-admissible} simply indicates $-2a_{1}^{[1]}\neq 0$.
Applying the affine transformation $x \mapsto Ax+B$, with $A=-\frac{a_{1}^{[2]}}{2a_{1}^{[1]}}$ and $B=-\frac{a_{0}^{[2]}}{a_{1}^{[2]}}$, and following \eqref{phi-psi-affine}, we conclude that  the form $\tilde{u}_{0} =\left( h_{A^{-1}}\circ \tau_{-B} \right) u_{0}$ fulfils $D\left( \tilde{\phi_{1}} \tilde{u}_{0} \right)  + \tilde{\psi_{1}} \tilde{u}_{0}= 0$, with
$$\tilde{\phi_{1}}(x)=x,\quad \tilde{\psi_{1}}(x)=x+\frac{2a_{1}^{[1]}a_{0}^{[2]}}{a_{1}^{[2]}a_{1}^{[2]}}-\frac{2a_{0}^{[1]}}{a_{1}^{[2]}},$$
corresponding to a Laguerre form of parameter $\alpha = -1+\frac{2}{a_{1}^{[2]}}\left( a_{0}^{[1]} -  \frac{a_{0}^{[2]}a_{1}^{[1]}}{a_{1}^{[2]}}\right)$.

\item[C)] Third case: $\deg\left(a_{2}(x) \right) = 0$. 
In a similar way, we have in this case $\phi_{1}(x) =1$ and $\psi_{1}(x) = -\dfrac{2a_{1}^{[1]}}{a_{0}^{[2]}} x - \dfrac{2a_{0}^{[1]}}{a_{0}^{[2]}}$, and again condition \eqref{bessel-admissible} points to the already known condition $-2a_{1}^{[1]}\neq 0$. Applying the affine transformation $x \mapsto Ax+B$, with $A=\sqrt{-\frac{a_{0}^{[2]}}{a_{1}^{[1]}}}$ and $B=-\frac{a_{0}^{[1]}}{a_{1}^{[1]}}=\beta_{0}$, and following \eqref{phi-psi-affine}, we conclude that  the form $\tilde{u}_{0} =\left( h_{A^{-1}}\circ \tau_{-B} \right) u_{0}$ fulfils $D\left( \tilde{u}_{0} \right)  + 2x \tilde{u}_{0}= 0$, meaning that it is a Hermite form.
\end{itemize}

\subsection{$J$ is a lowering operator of order one}
If we assume that $J$ is a lowering operator of order one and then $k=1$, Lemma \ref{J-descending} clarifies that $\deg \left( a_{\nu}(x) \right) \leq \nu-1,\; \nu=0,1,2$ and
\begin{equation}\label{lambda-k1}
\lambda_{n+1}^{[1]}= (n+1)a_{0}^{[1]}+ \frac{n(n+1)}{2} a_{1}^{[2]} \neq 0,\quad n \geq 0.
\end{equation}
Also, the identity $\tilde{P}_{n}(x) = P_{n}(x),\;n \geq 0,$ can be read as the following differential relation.
\begin{equation}
\frac{1}{2}\left(a_{0}^{[2]}+a_{1}^{[2]}x \right) P^{\prime\prime}_{n+1}(x)+a_{0}^{[1]}P^{\prime}_{n+1}(x) =\lambda_{n+1}^{[1]} P_{n}(x),\;  n \geq 0.
\end{equation}
In this case, Lemma \ref{tildeP=P} asserts that  $\tilde{P}_{n}(x) = P_{n}(x),\;n \geq 0,$ if and only if $J\left(  u_{n} \right) = \lambda_{n+1}^{[1]} u_{n+1}$, $n \geq 0$. Therefore, applying $J$ (transpose) to identity \eqref{xun} and in view of \eqref{J(xu)} we get the following, for $ n \geq 0$, with $\lambda_{0}^{[1]} =0$.
\begin{align}
\label{eq-orto-k1-n}& \lambda_{n+1}^{[1]}\left( x-\beta_{n}   \right)u_{n+1}-\left( a_{1}(x)+  \lambda_{n}^{[1]} \right) u_{n} - \gamma_{n+1}\lambda_{n+2}^{[1]}u_{n+2} + D\left(a_{2}(x) u_{n} \right) = 0.
\end{align}
Taking \eqref{eq-orto-k1-n} with $n=0$, we obtain:
$$\lambda_{1}^{[1]} \left( x-\beta_{0} \right) u_{1} - a_{1}(x)u_{0} - \gamma_{1}\lambda_{2}^{[1]} u_{2} + D \left(a_{2}(x) u_{0} \right) = 0.$$
Applying the orthogonality through the information $u_{1}= \left( \gamma_{1} \right)^{-1}P_{1}(x) u_{0}$ and $u_{2}= \left( \gamma_{1}  \gamma_{2} \right)^{-1}P_{2}(x) u_{0}$, we finally obtain:
\begin{equation}\label{functionalidentityk=1}
D \left( a_{2}(x) u_{0} \right) + \left( \frac{\lambda_{1}^{[1]}}{\gamma_{1}} \left(x-\beta_{0} \right)^{2} -\frac{\lambda_{2}^{[1]}}{\gamma_{2}} P_{2}(x) - a_{1}(x) \right)u_{0}=0.
\end{equation}
Let us proceed by examining precedent identities, now adapted to the case under study.
To do so, we notate by $\psi(x)$ the polynomial $\frac{\lambda_{1}^{[1]}}{\gamma_{1}} \left(x-\beta_{0} \right)^{2} -\frac{\lambda_{2}^{[1]}}{\gamma_{2}} P_{2}(x) - a_{1}(x) $, and notice that \eqref{functionalidentityk=1} is equivalent to
\begin{equation}\label{Basic funct ident of k=1}
a_{2}(x)D\left( u_{0} \right) = -\left( a_{1}^{[2]} +\psi(x) \right)u_{0}.
\end{equation}
Moreover, the first identity of Lemma \ref{tildeP=P}:
\begin{equation*}
J(u_{0})= \lambda_{1}^{[1]}u_{1} \Leftrightarrow - D\left(a_{0}^{[1]} u_{0} \right)+\frac{1}{2}D^{2}\left(a_{2}(x)u_{0}\right) = \frac{\lambda_{1}^{[1]}}{\gamma_{1}}P_{1}(x)u_{0},
\end{equation*}
yields $D^{2}\left(a_{2}(x)u_{0}\right) = \frac{2\lambda_{1}^{[1]}}{\gamma_{1}}P_{1}(x)u_{0}+2a_{0}^{[1]} D\left(u_{0} \right)$, whereas the differentiation of \eqref{functionalidentityk=1} provides
$D^{2}\left( a_{2}(x) u_{0}\right) + D\left( \psi(x) u_{0}\right) = 0$.
The consequent elimination of $D^{2}\left( a_{2}(x) u_{0}\right) $ allows us to conclude 
\begin{equation*}
 \left( \psi(x)+2a_{0}^{[1]} \right)D\left(u_{0}\right) + \left(   \psi^{\prime}(x)+\frac{2\lambda_{1}^{[1]}}{\gamma_{1}}P_{1}(x)\right) u_{0} = 0.
\end{equation*}
Multiplying this last identity by the polynomial $a_{2}(x)= a_{0}^{[2]}+a_{1}^{[2]} x$ (left multiplication) and using both \eqref{Basic funct ident of k=1} and Lemma \ref{phi-u=0}, we get
$$-\left( \psi(x)+a_{1}^{[2]} \right) \left( \psi(x)+2a_{0}^{[1]}  \right) +\left( \psi^{\prime}(x) +  \frac{2\lambda_{1}^{[1]}}{\gamma_{1}}P_{1}(x) \right)a_{2}(x) =0.$$ 
An immediate consequence of this polynomial identity regards the degree of $\psi(x)$, since we conclude that
$\frac{\lambda_{1}^{[1]}}{\gamma_{1}}- \frac{\lambda_{2}^{[1]}}{\gamma_{2}}=0$ and henceforth $\deg \psi \leq 1$.
\newline The deduced condition $\frac{\lambda_{1}^{[1]}}{\gamma_{1}}- \frac{\lambda_{2}^{[1]}}{\gamma_{2}} =0$ justifies the additional condition: 
\begin{equation}\label{condition1-k1}
a_{1}^{[2]} = a_{0}^{[1]} \frac{\gamma_{2}-2\gamma_{1}}{\gamma_{1}} , \; \textrm{since}\; \lambda_{1}^{[1]} = a_{0}^{[1]} \neq 0,\; \lambda_{2}^{[1]} = a_{1}^{[2]} + 2a_{0}^{[1]} \neq 0, 
\end{equation}
under which we obtain $D \left( \phi(x) u_{0} \right) + \psi(x)u_{0}=0$, where
\begin{center} 
$\phi(x)= a_{2}(x) = a_{1}^{[2]}x + a_{0}^{[2]} $ and 
$\psi(x)= - \dfrac{a_{0}^{[1]}}{\gamma_{1}} \left( \beta_{0}-\beta_{1} \right) \left( x-\beta_{0} \right)$.\end{center}
This set corresponds to a classical form if and only if $\psi^{\prime}(x)=\frac{a_{0}^{[1]}}{\gamma_{1}} \left( \beta_{0}-\beta_{1} \right) \neq 0$. It is easy to notice that $\deg \left( \psi \right) = 0$ yields $\psi(x)=0$ and $a_{2}(x)=0$. Dividing the latter functional identity by $a_{1}^{[2]}$ and using \eqref{condition1-k1}, where obviously $\gamma_{2}-2\gamma_{1} \neq 0$ by assumption on $a_{1}^{[2]}$, we get
$$ D\left( \left(x+ \frac{a_{0}^{[2]}}{a_{1}^{[2]}}  \right) u_{0}\right) - \frac{\beta_{0}-\beta_{1}}{\gamma_{2}-2\gamma_{1}} \left( x-\beta_{0}\right)u_{0}=0.$$
Applying the affine transformation $x \mapsto Ax+B$, with $A=-\frac{\gamma_{2}-2\gamma_{1}}{\beta_{0}-\beta_{1}}$ and $B=-\frac{a_{0}^{[2]}}{a_{1}^{[2]}}$, and following \eqref{phi-psi-affine}, we conclude that  the form $\tilde{u}_{0}=\left(h_{A^{-1}}\circ \tau_{-B} \right)u_{0}$ fulfils 
$$D\left(x \tilde{u}_{0}\right) +\left( x + \frac{\beta_{0}-\beta_{1}}{\gamma_{2}-2\gamma_{1}}\left(  \frac{a_{0}^{[2]}}{a_{1}^{[2]}}  +\beta_{0}\right)   \right) \tilde{u}_{0}=0.$$
This type of functional identity is known in the literature to characterize a Laguerre form, with parameter $\alpha  \notin \mathbb{Z}^{-}$ so that
$-1-\alpha = \frac{\beta_{0}-\beta_{1}}{\gamma_{2}-2\gamma_{1}}\left(  \frac{a_{0}^{[2]}}{a_{1}^{[2]}}  +\beta_{0}\right) $.
Let us remark that in view of \eqref{condition1-k1}, the dilation factor $A$ can also be written as follows.
\begin{equation}\label{A}
A= \frac{\gamma_{1}a_{1}^{[2]}}{\left( \beta_{1}-\beta_{0}  \right)a_{0}^{[1]} }
\end{equation}
Taking into account \eqref{shifted-coef} and recalling the recurrence coefficients of a Laguerre form: $\tilde{\beta}_{n}=2n+\alpha+1$ , $\tilde{\gamma}_{n+1}=(n+1)(n+\alpha+1)$, we conclude that
$$ (\alpha+1)A = \beta_{0}+\frac{a_{0}^{[2]}}{a_{1}^{[2]}} \;,\quad  (\alpha+3)A = \beta_{1}+\frac{a_{0}^{[2]}}{a_{1}^{[2]}}\; ,\quad (\alpha+1)A^2 = \gamma_{1}\,; $$
allowing us to obtain $2A=\beta_{1}-\beta_{0} $ and inserting \eqref{A} on $ (\alpha+1)A = \frac{\gamma_{1}}{A}$ we get
\begin{equation}
\alpha = \frac{2a_{0}^{[1]}}{a_{1}^{[2]}}-1.
\end{equation}
It is worth noting that $\frac{2a_{0}^{[1]}}{a_{1}^{[2]}}-1 \notin \mathbb{Z}^{-}$ due to \eqref{lambda-k1}.
\begin{remark}
In order to compare the above conclusions with the treatment of the same problem given in \cite{QuadraticAppell} and \cite{MesquitaPMH}
 for the operator $\Lambda = \sigma_{0} D+ \sigma_{1} DxD$, we must first write this $\Lambda$ in the $J$ format. A few calculations yield
 $$a_{0}^{[2]}=0, \; a_{1}^{[2]}=2 \sigma_{1},\; a_{0}^{[1]}=\sigma_{0} + \sigma_{1}$$
so that $\lambda_{n+1}^{[1]}= (n+1)\left( \sigma_{0}+(n+1)\sigma_{1})\right) \neq 0$ and $\alpha= \dfrac{\sigma_{0}}{\sigma_{1}}$.
\end{remark}

\subsection{$J$ is a lowering operator of order two}
Considering $k=2$, by Lemma \ref{J-descending} we have $\deg \left( a_{\nu}(x) \right) \leq \nu-2,\; \nu=0,1,2$ and
\begin{equation}
\lambda_{n+2}^{[2]}  = \frac{(n+1)(n+2)}{2} a_{0}^{[2]}\neq 0,\quad n \geq 0.
\end{equation}
The identity $\tilde{P}_{n}(x) = P_{n}(x),\;n \geq 0,$ can be read as follows.
\begin{equation}
\frac{1}{2} a_{0}^{[2]} P^{\prime\prime}_{n+2}(x)=\lambda_{n+2}^{[2]} P_{n}(x),\;  n \geq 0.
\end{equation}
Applying the same procedure as before, we get from \eqref{xun} and  \eqref{J(xu)} the following, for $ n \geq 0$, with $\lambda_{1}^{[2]} =0$.
\begin{align}
\label{eq-orto-k2-n}& \lambda_{n+2}^{[2]}\left( x-\beta_{n}   \right)u_{n+2}- \lambda_{n+1}^{[2]}u_{n+1} - \gamma_{n+1}\lambda_{n+3}^{[2]}u_{n+3} + D\left(a_{2}(x) u_{n} \right) = 0.
\end{align}
Taking $n=0$ and considering the orthogonality conditions, we obtain
\begin{equation} \label{functionalidentityk=2}
D\left(a_{0}^{[2]} u_{0} \right) +  \psi(x) u_{0}=0\end{equation}
with $\psi(x)= a_{0}^{[2]}\left( x-\beta_{0}\right) \left( \gamma_{1}\gamma_{2} \right)^{-1}P_{2}(x) -   3a_{0}^{[2]}\left( \gamma_{2}\gamma_{3} \right)^{-1}P_{3}(x)$.
\newline Being $k=2$, we have
\begin{equation*}
J(u_{0})= \lambda_{2}^{[2]}u_{2} \Leftrightarrow \frac{1}{2}D^{2}\left(a_{0}^{[2]}u_{0}\right) = \frac{\lambda_{2}^{[2]}}{\gamma_{1}\gamma_{2}}P_{2}u_{0},
\end{equation*}
whereas the differentiation of \eqref{functionalidentityk=2} provides
$D^{2}\left( a_{0}^{[2]} u_{0}\right) + D\left( \psi(x) u_{0}\right) = 0$.
The elimination of $D^{2}\left( a_{2}(x) u_{0}\right) $, allows us to obtain firstly
$$  \frac{2\lambda_{2}^{[2]}}{\gamma_{1}\gamma_{2}}P_{2}u_{0}+ \psi^{\prime}(x) u_{0} + \psi(x)D\left(u_{0}\right)=0,$$
and secondly using \eqref{functionalidentityk=2} to replace $D\left(u_{0}\right)$ we get :
\begin{equation}
\left( \frac{2\lambda_{2}^{[2]}}{\gamma_{1}\gamma_{2}}P_{2} + \psi^{\prime}(x) - \frac{1}{a_{0}^{[2]}}\psi^{2}(x) \right)u_{0}= 0,
\end{equation}
which implies $\frac{2\lambda_{2}^{[2]}}{\gamma_{1}\gamma_{2}}P_{2} + \psi^{\prime}(x) - \frac{1}{a_{0}^{[2]}}\psi^{2}(x) = 0$, by 
Lemma \ref{phi-u=0}. In view of this last polynomial identity we conclude that the only admissible solution is tied to the case where $\deg\left( \psi(x) \right) = 1$. Since $\phi(x)=a_{0}^{[2]}$, the final solution is an affine transformation of the Hermite form which was a already known solution for the reason that the Hermite sequence is the single Appell MOPS \cite{euler}.

\section{An application to the two-orthogonal polynomial sequences}\label{sec:twoorto}

Focusing now on the two-orthogonality, also known as a particular case of the step-line multiple orthogonal polynomials \cite{van-assche}, we concisely recall that a  two-orthogonal MPS  can be recursively computed by means of three constant sequences $\{\beta_{n}\}_{n \geq 0}$, $\{\alpha_{n}\}_{n \geq 1}$ and $\{\gamma_{n}\}_{n \geq 1}$, with $\gamma_{n+1}  \neq 0,\; n \geq 0$ \cite{Maroni-NumAlg, Douak-two-Laguerre} as indicated in \eqref{ic-2orto}-\eqref{rr-2orto}. These are the structure coefficients of the two-orthogonal polynomial sequences.
\begin{align}
\label{ic-2orto}  & P_{0}(x)=1,\:\: P_{1}(x)=x-\beta_{0},\;\:  P_{2}(x)=\left( x-\beta_{1} \right) P_{1}(x) - \alpha_{1}; \\
\label{rr-2orto}  & P_{n+3}(x) = \left( x-\beta_{n+2} \right) P_{n+2}(x) - \alpha_{n+2} P_{n+1}(x) - \gamma_{n+1} P_{n}(x).
\end{align}
In a similar manner to the orthogonal case, its dual sequence fulfils a recurrence relation based on those structure coefficients, as follows.
\begin{equation} \label{functional-2orto}
x u_{n} = u_{n-1} + \beta_{n}u_{n} + \alpha_{n+1} u_{n+1} +  \gamma_{n+1} u_{n+2}, \;  \textrm{with} \; n \geq 0, \,\; u_{-1} = 0.
\end{equation}
Moreover, all elements of the dual sequence can be written in terms of the pair $(u_{0}, u_{1})$. In particular, we have \cite{Maroni-NumAlg} (p. 307)
\begin{align}
\label{Eq-u2} u_{2} &= E_{1}(x) u_{0} + A_{0}(x) u_{1}\\
\label{Eq-u3} u_{3} &= B_{1}(x) u_{0} + F_{1}(x) u_{1}\\\
\label{Eq-u4} u_{4} &= E_{2}(x) u_{0} + A_{1}(x) u_{1}\
\end{align}
where
\begin{align*}
& E_{1}(x)= \frac{1}{\gamma_{1}} \left( x-\beta_{0}\right); \quad A_{0}(x)= - \frac{\alpha_{1}}{\gamma_{1}}; \quad 
B_{1}(x) = - \frac{\alpha_{2}}{\gamma_{1}\gamma_{2}} \left( x-\beta_{0}\right) - \frac{1}{\gamma_{2}} ; \\
& F_{1}(x) = \frac{1}{\gamma_{2}} \left( x-\beta_{1} + \frac{\alpha_{2}\alpha_{1}}{\gamma_{1}} \right); \quad E_{2}(x) = \frac{1}{\gamma_{3}} \big( \left( x-\beta_{2}\right) E_{1}(x) -\alpha_{3} B_{1}(x) \big); \\
& A_{1}(x) = -\frac{1}{\gamma_{3}} \left( \alpha_{3}F_{1}(x)+1+\frac{\alpha_{1}}{\gamma_{1}}\left( x-\beta_{2} \right)\right).  
\end{align*}
Let us keep considering an operator $J$ with only three terms, as follows
\begin{equation}\label{threeterms}
J = a_{0}(x)I + a_{1}(x)D+\frac{a_{2}(x)}{2}D^{2},
\end{equation}
and a two-orthogonal sequence $\{P_{n} \}_{n \geq 0}$ fulfilling $\tilde{P}_{n}(x) = P_{n}(x),\;n \geq 0,$ with  $\tilde{P}_{n}(x)$ defined by \eqref{JPn} with $k=1$ meaning that $J$ is a lowering operator of order one. The application of the same type of reasoning to the above new data regarding the two-orthogonality allows us now to state the following result.

\begin{proposition}
Let us consider a two-orthogonal MPS $\{P_{n} \}_{n \geq 0}$ and a lowering operator (of order one) $J$ of the form \eqref{threeterms}, so that $J\left( P_{n+1}(x) \right) =\lambda_{n+1}^{[1]} P_{n}(x), \; n \geq 0$, being $\lambda_{n+1}^{[1]}$ defined by \eqref{lambda-k1}. Then, the $2$-dimensional functional $U=\left( u_{0}, u_{1} \right)^{T}$ fulfils the vectorial relation $ D \left( \Phi U \right) +\Psi U = 0$, declaring that $\{P_{n}\}_{n \geq 0}$ must be two-classical (in Hahn's sense as in \cite{d-classical}), with
\newline
\begin{tabular}{cc}
$\quad\quad$ $\Psi(x)=\left[\begin{array}{cc}0 & 1 \\ \dfrac{2}{\gamma_{1}} \left( x-\beta_{0}\right) & -2\dfrac{\alpha_{1}}{\gamma_{1}} \end{array}\right], $
 & 
$\Phi(x)= \left[\begin{array}{cc} \phi_{1,1}(x) & \phi_{1,2}(x) \\  \phi_{2,1}(x) & \phi_{2,2}(x)\end{array}\right]$,
 \end{tabular}
\newline where the polynomials $\phi_{1,1}(x),\; \phi_{1,2}(x),\; \phi_{2,1}(x)$ and  $\phi_{2,2}(x)$ are defined by identities \eqref{11coeffmatrixEq-F} - \eqref{22coeffmatrixEq-F}, respectively.
\end{proposition}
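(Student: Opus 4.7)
The plan is to combine the Appell-type identity $J(u_{n})=\lambda_{n+1}^{[1]}u_{n+1}$ (which holds for all $n \geq 0$ by Lemma \ref{tildeP=P} with $k=1$) with the two-orthogonal dual recurrence \eqref{functional-2orto} and the explicit expansions \eqref{Eq-u2}--\eqref{Eq-u4}, so as to derive two functional identities in $(u_{0}, u_{1})$ which together constitute the claimed vectorial equation.

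First, Lemma \ref{J-descending} applied with $k=1$ forces $a_{0}(x)=0$, $a_{1}(x)=a_{0}^{[1]}$ constant and $a_{2}(x)=a_{0}^{[2]}+a_{1}^{[2]}x$. Consequently, by \eqref{transpose-J},
\begin{equation*}
J(u)=-a_{0}^{[1]}Du+\tfrac{1}{2}D^{2}(a_{2}u)=D\!\left[-a_{0}^{[1]}u+\tfrac{1}{2}D(a_{2}u)\right], \qquad u\in\mathcal{P}^{\prime}.
\end{equation*}
Specialising at $u=u_{0}$ and $u=u_{1}$ and invoking $J(u_{0})=\lambda_{1}^{[1]}u_{1}$ together with $J(u_{1})=\lambda_{2}^{[1]}u_{2}=\lambda_{2}^{[1]}(E_{1}u_{0}+A_{0}u_{1})$ produces the two primary identities
\begin{align*}
D\!\left[-a_{0}^{[1]}u_{0}+\tfrac{1}{2}D(a_{2}u_{0})\right] &= \lambda_{1}^{[1]}u_{1},\\
D\!\left[-a_{0}^{[1]}u_{1}+\tfrac{1}{2}D(a_{2}u_{1})\right] &= \lambda_{2}^{[1]}\left(E_{1}u_{0}+A_{0}u_{1}\right),
\end{align*}
in which the inner factors still contain the awkward terms $D(a_{2}u_{n})$.

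Next, I would eliminate those inner $D(a_{2}u_{n})$ by recasting them as $\mathbb{C}[x]$-linear combinations of $u_{0}, u_{1}$. Applying $J$ (transposed) to \eqref{functional-2orto} at $n=0,1$, using \eqref{J(xu)} and $J(u_{k})=\lambda_{k+1}^{[1]}u_{k+1}$, and then substituting \eqref{Eq-u2}--\eqref{Eq-u4} to eliminate $u_{2}, u_{3}, u_{4}$, produces
\begin{equation*}
D(a_{2}u_{n})=P_{n}(x)\,u_{0}+Q_{n}(x)\,u_{1},\qquad n=0,1,
\end{equation*}
where $P_{n}, Q_{n}$ are explicit polynomials built from the structure coefficients $\beta_{n}, \alpha_{n}, \gamma_{n}$, the scalars $\lambda_{n+1}^{[1]}$ and the coefficients of $a_{1}, a_{2}$. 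Inserting these into the two primary identities and dividing the $n=0$ equation by $-a_{0}^{[1]}$ and the $n=1$ equation by $-\lambda_{2}^{[1]}/2$ yields
\begin{align*}
D\!\left[\phi_{1,1}(x)u_{0}+\phi_{1,2}(x)u_{1}\right]+u_{1} &= 0,\\
D\!\left[\phi_{2,1}(x)u_{0}+\phi_{2,2}(x)u_{1}\right]+2E_{1}(x)u_{0}+2A_{0}(x)u_{1} &= 0,
\end{align*}
with $\phi_{1,1}=1-P_{0}/(2a_{0}^{[1]})$, $\phi_{1,2}=-Q_{0}/(2a_{0}^{[1]})$, $\phi_{2,1}=-P_{1}/\lambda_{2}^{[1]}$, $\phi_{2,2}=(2a_{0}^{[1]}-Q_{1})/\lambda_{2}^{[1]}$. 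Since $2E_{1}(x)=\tfrac{2}{\gamma_{1}}(x-\beta_{0})$ and $2A_{0}(x)=-\tfrac{2\alpha_{1}}{\gamma_{1}}$, these two rows assemble into $D(\Phi U)+\Psi U=0$ with precisely the matrix $\Psi$ stated in the proposition.

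I expect the main obstacle to be the bookkeeping in the second step for $n=1$: it requires the polynomials $E_{2}, A_{1}$ from \eqref{Eq-u4}, which themselves involve shifted versions of $E_{1}, B_{1}, F_{1}$ and quadratic combinations of the structure coefficients, so $P_{1}$ and $Q_{1}$ become sizeable. Once tracked carefully, the resulting explicit $\phi_{i,j}$ are what supply the four formulas referenced in the statement of the proposition.
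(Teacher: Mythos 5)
Your proposal is correct and follows essentially the same route as the paper: both apply the transposed $J$ to the dual recurrence \eqref{functional-2orto} at $n=0,1$, exploit $J(u_{n})=\lambda_{n+1}^{[1]}u_{n+1}$ and the expansions \eqref{Eq-u2}--\eqref{Eq-u4}, and the $\phi_{i,j}$ you obtain coincide with \eqref{11coeffmatrixEq-F}--\eqref{22coeffmatrixEq-F}. The only (immaterial) difference is bookkeeping order: the paper differentiates the transformed recurrence and then substitutes for $D^{2}(a_{2}u_{n})$, whereas you substitute for $D(a_{2}u_{n})$ inside the undifferentiated identity $J(u_{n})=D\bigl[-a_{0}^{[1]}u_{n}+\tfrac{1}{2}D(a_{2}u_{n})\bigr]$.
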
 

\begin{proof}
Let us begin by noticing that by Lemma  \ref{tildeP=P}  $J$ is in fact expressed by $J= a_{1}(x)D + \frac{a_{2}(x)}{2}D^{2}$ with $a_{1}(x)=a_{0}^{[1]}$ and $a_{2}(x)= a_{0}^{[2]} + a_{1}^{[2]} x$.
Applying $J$ on identity \eqref{functional-2orto} and taking into account \eqref{J(xu)} and $J\left( u_{n} \right) = \lambda_{n+1}^{[1]} u_{n+1}$, $n \geq 0$, we obtain the following relation for $ n \geq 0$ with $\lambda_{0}^{[1]} =0$:
\begin{equation*}
 \lambda_{n+1}^{[1]} \left( x- \beta_{n} \right) u_{n+1} -a_{1}(x)u_{n} + D\left(a_{2}(x) u_{n}\right)
=  \lambda_{n}^{[1]}u_{n}+ \alpha_{n+1}\lambda_{n+2}^{[1]} u_{n+2} + \gamma_{n+1} \lambda_{n+3}^{[1]} u_{n+3}. 
\end{equation*}
Taking $n=0$ and $n=1$ and differentiating we get the two following functional relations.
\begin{align}\label{n=0andn=1}
&D\left(  \lambda_{1}^{[1]} \left( x- \beta_{0} \right) u_{1} -a_{1}(x)u_{0} \right)+ D^{2}\left(a_{2}(x) u_{0}\right)
+D\left( -\alpha_{1}\lambda_{2}^{[1]} u_{2} - \gamma_{1} \lambda_{3}^{[1]} u_{3} \right) =0\\
\nonumber &D\left( \lambda_{2}^{[1]} \left( x- \beta_{1} \right) u_{2} -a_{1}(x)u_{1} \right) + D^2\left(a_{2}(x) u_{1}\right)
+D\left( - \lambda_{1}^{[1]}u_{1}- \alpha_{2}\lambda_{3}^{[1]} u_{3} - \gamma_{2} \lambda_{4}^{[1]} u_{4} \right)=0.
\end{align}
On the other hand, we may remark that
\begin{align*}
& J(u_{0})= \lambda_{1}^{[1]}u_{1} \Leftrightarrow  D^{2}\left(a_{2}(x) u_{0}\right) = 2 \lambda_{1}^{[1]}u_{1} + 2D\left(a_{0}^{[1]} u_{0} \right),\\
& J(u_{1})= \lambda_{2}^{[1]}u_{2} \Leftrightarrow  D^{2}\left(a_{2}(x) u_{1}\right) = 2 \lambda_{2}^{[1]}u_{2} + 2D\left(a_{0}^{[1]} u_{1} \right),
\end{align*}
enabling the substitution of the second order derivatives of \eqref{n=0andn=1} that together with 
\eqref{Eq-u2}-\eqref{Eq-u4} shall give place to the two next identities.
\begin{align*}
& D\left( \phi_{1,1}(x) u_{0} + \phi_{1,2}(x) u_{1}\right) +u_{1} = 0,\\
& D\left( \phi_{2,1}(x) u_{0} + \phi_{2,2}(x) u_{1}\right) +2E_{1}(x)u_{0}+2A_{0}u_{1} = 0,\;\; \textrm{with}
\end{align*}
\begin{align}
\label{11coeffmatrixEq-F} \phi_{1,1}(x) &= \frac{1}{2} - \frac{\alpha_{1}  \lambda_{2}^{[1]}}{2 \lambda_{1}^{[1]}} E_{1}(x) - \frac{\gamma_{1}  \lambda_{3}^{[1]}}{2 \lambda_{1}^{[1]}} B_{1}(x) \\
\label{12coeffmatrixEq-F} \phi_{1,2}(x) &= \frac{1}{2}\left(x-\beta_{0} \right) - \frac{\alpha_{1}  \lambda_{2}^{[1]}}{2 \lambda_{1}^{[1]}} A_{0}(x) - \frac{\gamma_{1}  \lambda_{3}^{[1]}}{2 \lambda_{1}^{[1]}} F_{1}(x) \\
\label{21coeffmatrixEq-F} \phi_{2,1}(x) &= \left(x-\beta_{1} \right)E_{1}(x)-\frac{\alpha_{2}  \lambda_{3}^{[1]}}{ \lambda_{2}^{[1]}} B_{1}(x)-\frac{\gamma_{2}  \lambda_{4}^{[1]}}{ \lambda_{2}^{[1]}} E_{2}(x)\\
\label{22coeffmatrixEq-F} \phi_{2,2}(x) &= \left(x-\beta_{1} \right)A_{0}(x)-\frac{\alpha_{2}  \lambda_{3}^{[1]}}{ \lambda_{2}^{[1]}} F_{1}(x)-\frac{\gamma_{2}  \lambda_{4}^{[1]}}{ \lambda_{2}^{[1]}} A_{1}(x).
\end{align}
In particular, we have assured that
$\deg \left( \phi_{1,1}(x)\right) \leq 1$, $\deg \left( \phi_{1,2}(x)\right) \leq 1$, $\deg \left( \phi_{2,1}(x)\right) \leq 2$ and $\deg \left( \phi_{1,2}(x)\right) \leq 1$  as required in \cite{d-classical}.
\end{proof}

\section{Conclusion and future developments}

We have learned in sections two and three, the operator $J$ is stretchy enough to become some  of the most famous operators that do not increase the degree of polynomials.  In addition, the problem of finding the polynomial sequences that coincide with the (normalized) $J$-image of themselves, or more briefly, the sequences $\textbf{P} = \left( P_{0}, P_{1}, \ldots \right)^{t}$ such that $\textbf{P} = \Lambda^{[k]}_{J} \; J \left( \textbf{P} \right),$ for a certain matrix $\Lambda^{[k]}_{J}$ (with infinite dimensions) filled with the necessary normalization coefficients, may be transposed to the analysis of functional identities.
\newline This theoretical exposition is then firstly put in practice with a simple case in the fourth section. More precisely, we have chosen an operator $J$ with only three terms  $J = a_{0}(x)I+a_{1}(x)D + \frac{a_{2}(x)}{2}D^{2}$. With this choice, we may ponder the question above for three orders of differentiation of $J$: $k=0$, $k=1$ and $k=2$. When $k=0$, our debate goes around the polynomial coefficients of the differential equation fulfilled by the four family of classic MOPSs, whereas when $k=1$, the solution belongs to the classic Laguerre family enlarging the prior results of \cite{QuadraticAppell, MesquitaPMH},  and finally when $k=2$, the single solution is indeed an Hermite form.
 A brief introduction to the analysis of a two-orthogonal MPS is provided in section \ref{sec:twoorto} for a lowering operator of order one.
\newline Although most conclusions regarding the orthogonal hypothesis go along with the literature, the different path here taken puts in evidence a common approach for different types of differential operators and allows the rephrasing of a well known environment (as the one of orthogonal sequences and $J$ up to three terms). The initial approach to a two-orthogonal situation let us aspire that in the near
future we may apply it when other operators $J$ are chosen and when the polynomial sequence $\{P_{n}\}_{n \geq 0}$ is, for instance, $d$-orthogonal with $d>1$ also known as the step-line multiple orthogonal polynomials.

\section*{Acknowledgments}

\noindent 
T. Augusta Mesquita was partially supported by CMUP (UID/MAT/00144/2013), which is funded by FCT (Portugal) with national (MEC) and European structural funds (FEDER), under the partnership agreement PT2020.


\end{document}